%
%
%
\documentclass{amsproc}
\usepackage{epsfig,amscd,amssymb,amsmath,amsfonts}
\usepackage{xypic}

\newtheorem{theorem}{Theorem}[section]
\newtheorem{lemma}[theorem]{Lemma}
\newtheorem{proposition}{Proposition}[section]
\theoremstyle{definition}
\newtheorem{definition}[theorem]{Definition}

\newtheorem{example}[theorem]{Example}

\theoremstyle{remark}
\newtheorem{remark}[theorem]{Remark}

\numberwithin{equation}{section}




\begin{document}

\title{Bar Simplicial Modules and Secondary Cyclic  (Co)homology}
\author{Jacob Laubacher }
\address{
Department of Mathematics and Statistics, Bowling Green State University, Bowling Green, OH 43403 }
\email{jlaubac@bgsu.edu}

\author{Mihai D. Staic}
\address{
Department of Mathematics and Statistics, Bowling Green State University, Bowling Green, OH 43403 }
\address{Institute of Mathematics of the Romanian Academy, PO.BOX 1-764, RO-70700 Bu\-cha\-rest, Romania.}
\thanks{The second author was partially supported by a grant of the Romanian National Authority for Scientific Research, CNCS-UEFISCDI, project number PN-II-ID-PCE-2011-3-0635, contract nr. 253/5.10.2011.}
\email{mstaic@bgsu.edu}


\author{Alin Stancu}
\address{Department of Mathematics, Columbus State  University, Columbus, GA 31907}
\email{stancu\_alin1@columbusstate.edu}

\subjclass[2010]{Primary  16E40, Secondary  18G30, 19D55}
\date{January 1, 1994 and, in revised form, June 22, 1994.}


\keywords{Hochschild  cohomology, Cyclic cohomology, simplicial $k$-modules}

\begin{abstract} In this paper we study the simplicial structure of the complex $C^{\bullet}((A,B,\varepsilon); M)$, associated to the secondary Hochschild cohomology. The main ingredient is the simplicial object $\mathcal{B}(A,B,\varepsilon)$, which plays a role equivalent to that of the bar resolution associated to an algebra. We also introduce the secondary cyclic (co)homology and establish some of its properties (Theorems \ref{Connes} and \ref{Connes2}).
\end{abstract}

\maketitle


%

\section*{Introduction}

Hochschild cohomology of an associative $k$-algebra $A$ with coefficients in the $A$-bimodule $M$, $H^\bullet(A, M)$, was introduced in \cite{h} as the homology of an explicit complex. Hochschild's motivation was the study of certain extensions of algebras and the characterization of separability of algebras. Cartan and Eilenberg's realization that Hochschild cohomology is an  example of a derived functor meant that resolutions can be used to study it (see \cite{CE}). The main ingredient used for the description of the Hochschild (co)homology became the so called bar resolution. A few years later, Gerstenhaber discovered the importance of Hochschild cohomology in the study of deformations of algebras as well as its rich algebraic structure (see \cite{g1} and \cite{g2}). A first connection with geometry was noticed in \cite{hkr} where it was proved that, for a smooth algebra $A$, the Hochschild homology can be explicitly computed using the module of differential forms on $A$.  Later, Connes noticed that the complex used to define the Hochschild cohomology $HH^{\bullet}(A)=H^{\bullet}(A, A^*)$ admits an action of the cyclic group $C_{\bullet+1}$, and that this action is compatible with the differential (see \cite{c}, \cite{lo})). He used this fact to define the cyclic cohomology which, in the commutative case, is essentially equivalent to  the de Rham homology.   This was a pivotal point in the development of noncommutative geometry since cyclic (co)homology is defined for algebras that are not necessarily commutative.  

The secondary Hochschild cohomology groups, $H^n((A,B,\varepsilon);M)$, are associated to a $B$-algebra $A$ (determined by $\varepsilon:B\to A$) and an $A$-bimodule $M$, which is $B$-symmetric.  Their construction was motivated by an algebraic version of the second Postnikov invariant (see \cite{staic}). The secondary Hochschild  cohomology was used in \cite{sta} to study deformations of the $B$-algebra structures on
$A[[t]$.  Some of its properties were studied in \cite{s3}, where it was proved that the complex $C^{\bullet}((A,B,\varepsilon); A)$ is a multiplicative non-symmetric operad and that the secondary Hochschild cohomology $H^{\bullet}((A,B,\varepsilon);A)$ is a Gerstenhaber algebra. It was also proved there, for characteristic 0, that $H^{\bullet}((A,B,\varepsilon);A)$ admits a Hodge type decomposition. When $B=k$ one can easily see that the secondary Hochschild cohomology reduces to the usual Hochschild cohomology.

In this paper we study the simplicial structure of the complex $C^{\bullet}((A,B,\varepsilon);M)$, which defines the secondary cohomology.  For this we introduce the notion of simplicial module over a simplicial algebra.  The main example is an object $\mathcal{B}(A,B,\varepsilon),$ that we call the {\it Secondary Bar Simplicial Module.} In many ways $\mathcal{B}(A,B,\varepsilon)$ behaves like the bar resolution associated to a $k$-algebra $A$.  Using $\mathcal{B}(A,B,\varepsilon)$  we get a presentation of $H^{\bullet}((A,B,\varepsilon);M)$ as the homology of the complex associated to a simplicial $k$-module $Hom_{\mathcal{A}(A,B,\varepsilon)}(\mathcal{B}(A,B,\varepsilon), \mathcal{C}(M))$.
This approach also allows us to give natural constructions for the secondary  Hochschild  and secondary cyclic cohomologies  associated to the triple $(A,B,\varepsilon)$ ($HH^{\bullet}(A,B,\varepsilon)$ respectively  $HC^{\bullet}((A,B,\varepsilon);M)$). These two new cohomology theories have many of the nice properties one would expect. We mention here the functionality and the existence of a long exact sequence (of Connes' type) relating them.  We should point out that $HH^{n}(A,B,\varepsilon)$ is not a particular example of the group $H^{n}((A,B,\varepsilon);M)$, this being one place where it is important to work with simplicial modules and not just modules.

Another application of the  secondary bar simplicial module $\mathcal{B}(A,B,\varepsilon)$ is that it allows us to introduce the secondary homology groups $H_{\bullet}((A,B,\varepsilon);M)$ (as well as the Hochschild and cyclic homology groups associated to the triple $(A, B, \varepsilon)$). When $B=k$, we recover the usual Hochschild and cyclic homology. We expect that the groups introduced here, $HH_{\bullet}(A,B,\varepsilon)$ and $HC_{\bullet}(A,B,\varepsilon)$, have applications to geometry and we plan  to study this problem in a future paper.

\section{Preliminaries}

\subsection{Cyclic (co)homology}
Throughout this paper $k$ is a field, $\otimes=\otimes_k$, and all $k$-algebras have a multiplicative unit. 
Cyclic cohomology was introduced by Connes. For a comprehensive account of this topic see \cite{c}, \cite{k} or \cite{lo}.
We recall here that the cyclic cohomology is the cohomology of a certain sub-complex of the Hochschild complex. More precisely, let $A$ be an associative $k$-algebra and $A^{*}=Hom_k(A, k)$ with the  $A$-bimodule structure defined by $(afa')(x)=f(a'xa)$. The Hochschild coboundary on $C^\bullet(A, A^*)$ transfers to $C^\bullet(A):=C^{\bullet+1}(A, k)$, where it is given by the formula
\begin{center}$\displaystyle(bf)(a_0, a_1,\dots,a_{n+1})=\sum_{i=0}^n(-1)^if(\dots,a_ia_{i+1},\dots)+(-1)^{n+1}f(a_{n+1}a_0, \dots,a_n).$\end{center}
The operators $b':C^{n}(A)\rightarrow C^{n+1}(A)$ and $\lambda:C^{n}(A)\rightarrow C^{n}(A)$ defined by
\begin{center}$\displaystyle(bf)(a_0, a_1,\dots,a_{n+1})=\sum_{i=0}^n(-1)^if(\dots,a_ia_{i+1},\dots)$,\\$(\lambda f)(a_0,\cdots, a_n)=(-1)^nf(a_n,\cdots,a_0)$\end{center} satisfy the relation $(1-\lambda)b=b'(1-\lambda)$. If we set $C_\lambda^\bullet(A):=Ker (1-\lambda)$, then $(C_\lambda^\bullet, b)$ is a complex, called the cyclic complex of $A$.  The homology of this complex is called the cyclic cohomology of $A$ and is denoted by $HC^\bullet(A)$.

The canonical monomorphism $C_\lambda^\bullet(A)\rightarrow C^{\bullet}(A, A^*)$ from the cyclic cochain complex $C_\lambda^\bullet(A)$ to the Hochschild complex with coefficients in $A^*$ induces a morphism $I:HC^{\bullet}(A)\to HH^{\bullet}(A)$. Connes proved that, in characteristic 0, there exists a long exact sequence
\begin{eqnarray*}
...\rightarrow HC^{n-2}(A)\rightarrow HC^n(A) \stackrel{I}{\rightarrow} HH^n(A)\rightarrow HC^{n-1}(A)\rightarrow HC^{n+1}(A) \stackrel{I}{\rightarrow}...
\end{eqnarray*}

The ``dual" notion, called cyclic homology, appeared in the work of Tsygan (see \cite{TS} ) and Loday-Quillen (see \cite{LQ}) and it can be described as the homology of a quotient of the Hochschild complex $(C_\bullet(A, A), b)$. More precisely, let $C_n(A, A)=A^{\otimes(n+1)}$ and $b, b':C_n(A)\rightarrow C_{n-1}(A)$ and $\lambda:C_n(A)\rightarrow C_n(A)$ be given by \begin{center} $\displaystyle b(a_0\otimes\cdots\otimes a_n)=\sum_{i=0}^{n-1}(-1)^ia_0\otimes\cdots\otimes a_ia_{i+1}\otimes\cdots\otimes a_n+(-1)^na_na_0\otimes\cdots\otimes a_{n-1},$\\
$\displaystyle b'(a_0\otimes\cdots\otimes a_n)=\sum_{i=0}^{n-1}(-1)^ia_0\otimes\cdots\otimes a_ia_{i+1}\otimes\cdots\otimes a_n,$\\ $\lambda(a_0\otimes\cdots\otimes a_n)=(-1)^na_n\otimes a_0\otimes\cdots\otimes a_{n-1}.$\end{center}
It is known that $(1-\lambda)b'=b(1-\lambda)$, so the differential $b$ is well-defined on $C_\bullet^\lambda(A):=C_\bullet(A, A)/Im(1-\lambda)$. The homology of the complex $(C_\bullet^\lambda(A), b)$ is called the cyclic homology of $A$ and is denoted by $HC_\bullet(A)$.

Similar to the case of cyclic cohomology, in characteristic 0, there is a long exact sequence relating the cyclic and Hochschild homology of $A$, $$...\rightarrow HC_n(A){\rightarrow}  HC_{n-2}(A){\rightarrow}  HH_{n-1}(A)\stackrel{I}{\rightarrow}  HC_{n-1}(A)\rightarrow HC_{n-3}(A)\rightarrow ....$$

Part of this paper is to show that similar long exact sequences exist and relate the secondary Hochschild and cyclic (co)homologies associated to a triple $(A, B, \varepsilon)$.

\subsection{The secondary Hochschild cohomology}
Let $A$ be an associative $k$-algebra, $B$  a commutative $k$-algebra, $\varepsilon:B\to A$ a morphism of $k$-algebras such that $\varepsilon(B)\subset {\mathcal Z}(A)$, the center of $A$,  and $M$ an $A$-bimodule symmetric over $B$ (i.e. $\varepsilon(b)m=m\varepsilon(b)$ for all $b\in B$ and $m\in M$).

Given the data above, the secondary Hochschild cohomology of the triple $(A, B, \varepsilon),$ with coefficients in $M,$  was introduced by the second author in \cite{sta} as a tool in the study of algebra deformations, $A[[t]],$ which admit (nontrivial)  $B$-algebra structures. It is denoted by $H^\bullet((A, B, \varepsilon); M)$ and some of its properties were discussed in \cite{s3}. We mention here the existence of a Gerstenhaber algebra structure on $H^\bullet((A, B, \varepsilon); A)$ and of a Hodge-type decomposition, in characteristic zero, for $H^\bullet((A, B, \varepsilon); M)$.

We recall from \cite{sta} the definition of the secondary Hochschild cohomology. The secondary Hochschild complex is defined by setting \begin{center} $C^n((A,B,\varepsilon);M):=Hom_k(A^{\otimes n}\otimes B^{\otimes \frac{n(n-1)}{2}},M).$\end{center} To define the coboundary map, $\delta^{\varepsilon}_n:C^n((A,B,\varepsilon);M)\to C^{n+1}((A,B,\varepsilon);M)$, it is convenient to view an element $T\in A^{\otimes n}\otimes B^{\otimes \frac{n(n-1)}{2}}$ as having the matrix representation
\begin{center}$
T={\otimes}\left(
\begin{array}{cccccccc}
 a_{1}       & b_{1,2} &...&b_{1,n-2}&b_{1,n-1}&b_{1,n}\\
 1  & a_{2} &...&b_{2,n-2}      &b_{2,n-1}&b_{2,n}\\
 .       &. &...&.&.&.\\
 1& 1  &...&1&a_{n-1}&b_{n-1,n}\\
 1& 1  &...&1&1&a_{n}\\
\end{array}
\right),
$\end{center}
where $a_i\in A$, $b_{i,j}\in B,$ and $1\in k$.
For $f\in C^n((A,B,\varepsilon);M)$, we define:
\begin{eqnarray*}&
\delta^{\varepsilon}_n(f)\left(\displaystyle\otimes
\left(
\begin{array}{cccccccc}
 a_{0}& b_{0,1} &  ...&b_{0,n-1}&b_{0,n}\\
1 & a_{1}       & ...&b_{1,n-1}&b_{1,n}\\
. & .    &...&.&.\\
1& 1& ...&a_{n-1}&b_{n-1,n}\\
1 & 1&...&1&a_{n}\\
\end{array}
\right)\right)=\\
& \;&\\
&
a_0\varepsilon(b_{0,1}b_{0,2}...b_{0,n})f\left(\displaystyle\otimes
\left(
\begin{array}{cccccc}
 a_{1}       & b_{1,2} &...&b_{1,n-1}&b_{1,n}\\
1  & a_{2} &...    &b_{2,n-1}&b_{2,n}\\
 .       &. &...&.&.\\
1& 1  &...&a_{n-1}&b_{n-1,n}\\
 1& 1  &...&1&a_{n}\\
\end{array}
\right)\right)+&\\
&\;&\;\\
& \sum_{i=1}^{n}(-1)^if\left(\displaystyle\otimes
\left(
\begin{array}{cccccccc}
 a_{0}       & b_{0,1}&... &b_{0,i-1}b_{0,i}&...&b_{0,n-1}&b_{0,n}\\
1  &a_{1}&...&b_{1,i-1}b_{1,i}     &...&b_{1,n-1}&b_{1,n}\\
 .       &. &...&.&...&.&.\\
  1       &1 &...&a_{i-1}a_i\varepsilon(b_{i-1,i})&...&b_{i-1,n-1}b_{i,n-1}&b_{i-1,n}b_{i,n}\\
   .       &. &...&.&...&.&.\\
1& 1  &...&.&...&a_{n-1}&b_{n-1,n}\\
 1& 1  &...&.&...&1&a_{n}\\
\end{array}
\right)\right)+&\\
& \;&\\
&(-1)^{n+1}f\left(\displaystyle\otimes
\left(
\begin{array}{cccccc}
 a_{0}       & b_{0,1} &...&b_{0,n-2}&b_{0,n-1}\\
1  &a_{1} &...     &b_{1,n-2}&b_{1,n-1}\\
 .       &. &...&.&.\\
1& 1  &...&a_{n-2}&b_{n-2,n-1}\\
 1& 1  &...&1&a_{n-1}\\
\end{array}
\right)\right)a_{n}\varepsilon(b_{0,n}b_{1,n}...b_{n-1,n}).&
\end{eqnarray*}
The following result was proved in \cite{sta}.

\begin{proposition} $(C^n((A,B,\varepsilon);M),\delta_n^{\varepsilon})$ is a complex (i.e. $\delta_{n+1}^{\varepsilon}\delta_n^{\varepsilon}=0$). We denote its homology by $H^n((A,B,\varepsilon);M)$ and we call it the secondary Hochschild cohomology associated to a triple $(A,B,\varepsilon)$ with coefficients in $M$.
\end{proposition}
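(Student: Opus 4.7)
The plan is to exhibit $\delta_n^\varepsilon$ as arising from a pre-simplicial structure, so that $\delta^{\varepsilon}_{n+1}\delta^{\varepsilon}_n=0$ follows from the standard signed cancellation of paired terms. For each $i\in\{0,1,\ldots,n+1\}$ I would define a face map
\[
\partial_i^n \colon A^{\otimes(n+1)} \otimes B^{\otimes \frac{n(n+1)}{2}} \longrightarrow A^{\otimes n}\otimes B^{\otimes\frac{n(n-1)}{2}}
\]
by reading the $i$-th summand off the formula for $\delta_n^\varepsilon$: $\partial_0$ deletes the top row/column and records the scalar $a_0\varepsilon(b_{0,1}\cdots b_{0,n})$ as a left action on $M$; for $1\leq i\leq n$, $\partial_i$ collapses the $(i-1)$-st and $i$-th rows/columns into one, placing $a_{i-1}a_i\varepsilon(b_{i-1,i})$ on the new diagonal entry and multiplying the $B$-entries as $b_{i-1,k}b_{i,k}$ (respectively $b_{k,i-1}b_{k,i}$) on the affected off-diagonal positions; and $\partial_{n+1}$ deletes the bottom row/column and records $a_n\varepsilon(b_{0,n}\cdots b_{n-1,n})$ as a right action on $M$. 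Then $\delta_n^\varepsilon(f)=\sum_{i=0}^{n+1}(-1)^i f\circ\partial_i^n$, with the first and last summands appropriately twisted by the $A$-bimodule structure of $M$.

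With these face maps in place, $\delta_{n+1}^\varepsilon\delta_n^\varepsilon=0$ reduces to verifying the pre-simplicial identities
\[
\partial_i^n\circ\partial_j^{n+1}=\partial_{j-1}^n\circ\partial_i^{n+1}\qquad\text{whenever }0\leq i<j\leq n+2,
\]
which I would check by a case split on $(i,j)$: (a) $i=0$ with $0<j\leq n+1$, or symmetrically $0<i\leq n+1$ with $j=n+2$, where a strip commutes with a merge via associativity in $A$ and the centrality $\varepsilon(B)\subset\mathcal{Z}(A)$; (b) $1\leq i<j\leq n+1$, two interior merges; and (c) $i=0$, $j=n+2$, where the outer left and right multiplications commute freely since $M$ is $B$-symmetric. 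The standard cancellation $\sum_{0\leq i<j}(-1)^{i+j}f\circ\partial_i\partial_j+\sum_{j\leq i}(-1)^{i+j}f\circ\partial_i\partial_j=0$ then yields $\delta^2=0$.

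The hard part is case (b), in particular the adjacent subcase $j=i+1$, where both merges act on overlapping rows and columns. Here one must match the new diagonal entry at the triply-merged position: computing by the two orders gives on one side $\bigl((a_{i-1}a_i)\varepsilon(b_{i-1,i})\bigr)\,a_{i+1}\,\varepsilon(b_{i-1,i+1}b_{i,i+1})$ and on the other $a_{i-1}\,\bigl((a_ia_{i+1})\varepsilon(b_{i,i+1})\bigr)\,\varepsilon(b_{i-1,i}b_{i-1,i+1})$, and these agree by associativity in $A$, commutativity of $B$, and centrality of $\varepsilon(B)$. The affected off-diagonal $B$-entries combine in both orders as products $b_{i-1,k}b_{i,k}b_{i+1,k}$ (and the column analogue $b_{k,i-1}b_{k,i}b_{k,i+1}$), which agree by commutativity of $B$. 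The non-adjacent subcases $j>i+1$ are easier since the two merges act on disjoint blocks. Once these identities are verified, the proposition follows immediately.
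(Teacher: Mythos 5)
Your proposal is correct, and the identities you isolate (associativity of $A$, commutativity of $B$, centrality of $\varepsilon(B)$, and the $B$-symmetry of $M$ for the case $i=0$, $j=n+2$) are exactly the ones needed; note, though, that the paper does not prove this proposition at all here --- it cites \cite{sta} --- so the real comparison is with the simplicial machinery the paper builds afterwards, which is a structured version of your argument. The one informality in your write-up is that $\partial_0^n$ and $\partial_{n+1}^n$ are not honest maps into $A^{\otimes n}\otimes B^{\otimes n(n-1)/2}$: each produces an extra factor of $A$ that must act on $M$, so your cases (a) and (c) are really identities between coface maps on $Hom_k(-,M)$ rather than pre-simplicial identities on the tensor modules, and they must be checked with the module action in hand. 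The paper repairs precisely this point in Example \ref{exb}: it enlarges the chains to $B_n=A^{\otimes(n+2)}\otimes B^{\otimes (n+1)(n+2)/2}$, with the two extra $A$-slots carrying a left $A_n=A\otimes B^{\otimes(2n+1)}\otimes A^{op}$-module structure, so that every face $\delta_i^{\mathcal{B}}$, $0\le i\le n$, is a uniform ``merge'' of adjacent rows and columns. The pre-simplicial identities then reduce entirely to your case (b) --- the triple-merge computation you wrote out, which is indeed the heart of the matter --- while your twisted cases (a) and (c) are recovered for free by applying $Hom_{A_n}(-,M)$ (Lemma \ref{lemma3} together with the remark identifying $H^{\bullet}((A,B,\varepsilon);M)$ with the cohomology of $Hom_{\mathcal{A}(A,B,\varepsilon)}(\mathcal{B}(A,B,\varepsilon),\mathcal{C}(M))$). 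So your route is sound as a direct verification; the paper's route buys a single uniform family of face maps to check and makes the roles of centrality and $B$-symmetry appear once, in the module structure, rather than case by case.
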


\subsection{Simplicial category $\Delta$} We recall a few definitions and results from \cite{lo}. Let $\Delta$ be the category whose objects are the finite ordered sets $\overline{n}=\{0,1,...,n\}$, for integers $n\geq 0$, and whose morphisms are the nondecreasing maps. One can show that any morphism in $\Delta$ can be written as the composition of \begin{it}face maps, \end{it} $d^i:\overline{n} \to \overline{n+1},$  and \begin{it} degeneracy maps, \end{it}$s^i:\overline{n} \to \overline{n-1},$ where
$$d^i(u)= \left\{
\begin{array}{c l}
u\; &{\rm if}\;  u< i\\
u+1 &{\rm if}\;  u\geq i,
\end{array}
\right. s^i(u)= \left\{
\begin{array}{c l}
u\; &{\rm if}\;  u\leq i\\
u-1 &{\rm if}\;  u> i.
\end{array}
\right.
$$

The maps $d^i$ and $s^i$ depend on $n$, although the notation does not reflect this dependance. When each composite function below is defined, it can be shown that we have the following identities: \begin{center} $d^{j}d^{i}=d^{i}d^{j-1}$ if $i<j$\end{center}
                                  \begin{center} $s^{j}s^{i}=s^{i}s^{j+1}$ if $i\leq j$\end{center}
                                  \begin{center}  $s^{j}d^i= d^{i}s^{j-1}$ if $i<j$ \end{center}
                                  \begin{center}  $s^{j}d^i=\mathrm{identity}$ if  $i=j, i=j+1$\end{center}
                                  \begin{center} $s^{j}d^i=d^{i-1}s^{j}$ if  $i>j+1.$ \end{center}

A simplicial object $X,$ in a category $\mathcal{C}$, is a functor $X:\Delta^{op}\to \mathcal{C}$. This means that there are objects $X_0, X_1, X_2,\dots$ in $\mathcal{C}$ such that for any nondecreasing map $\phi:\overline{m}\rightarrow\overline{n}$ we have a morphism $X(\phi):X_n\rightarrow X_m$. In addition, for any $\phi$ and $\psi$  nondecreasing maps, we have $X(\phi\circ\psi)=X(\psi)\circ X(\phi)$.

Using the notations $\delta_i:=X(d^i)$ and $\sigma_i:=X(s^i),$ the maps $\delta_i:X_{n+1}\to X_{n}$ and $\sigma_i:X_{n-1}\to X_{n}$ satisfy the following relations.
\begin{eqnarray}
&&\delta_i\delta_j=\delta_{j-1}\delta_i\; {\rm if} \; i<j \label{pres}
\end{eqnarray}
\begin{eqnarray}
\begin{split}
&&\sigma_i\sigma_j=\sigma_{j+1}\sigma_i \; {\rm if} \; i\leq j\\
&&\delta_i\sigma_j=\sigma_{j-1}\delta_i\; {\rm if} \; i<j\\
&&\delta_i\sigma_j=id_{X_n}\; {\rm if} \; i=j\; {\rm or} \; i=j+1\\
&&\delta_i\sigma_j=\sigma_{j}\delta_{i-1}\; {\rm if} \; i>j+1. \label{simp}
\end{split}
\end{eqnarray}
A co-simplicial object in a category $\mathcal{C}$, is a functor $X:\Delta\to \mathcal{C}$.

\section{Simplicial Algebras and Modules}

In this section we introduce (recall) some definitions concerning simplicial algebras. We also give a few examples that will be used extensively in the rest of the paper (the most important being Example \ref{exb}).

\begin{definition} A {\it pre-simplicial} $k$-algebra  $\mathcal{A}$ is a collection of $k$-algebras $A_n$  together with morphisms of $k$-algebras, $\delta_i^{\mathcal{A}}:A_{n}\to A_{n-1}$, such that Equation (\ref{pres}) is satisfied. \end{definition}

\begin{definition} A {\it simplicial $k$-algebra}  $\mathcal{A}$ is a collection of $k$-algebras $A_n$ together with morphisms of $k$-algebras, $\delta_i^{\mathcal{A}}:A_{n}\to A_{n-1}$ and $\sigma_i^{\mathcal{A}}:A_{n}\to A_{n+1}$, such that  Equations (\ref{pres}) and (\ref{simp}) are satisfied.
\end{definition}

\begin{example}\label{presimpl}
Let $A$ be a $k$-algebra. We define the simplicial $k$-algebra $\mathcal{A}(A)$ by setting $A_n=A$, $\delta_i^{\mathcal{A}}=id_A$, and $\sigma_i^{\mathcal{A}}=id_A$.
\end{example}

\begin{example}
Let $A$ be a $k$-algebra, $B$ a commutative $k$-algebra, and $\varepsilon:B\to A$ a morphism of $k$-algebras such that $\varepsilon(B)\subset \mathcal{Z}(A)$. We define the simplicial $k$-algebra $\mathcal{A}(A,B,\varepsilon)$ by setting $A_n=A\otimes B^{\otimes (2n+1)}\otimes A^{op}$ and
\begin{eqnarray*}
&& \; \; \delta_0^{\mathcal{A}}(a\otimes \alpha_1\otimes \alpha_2\otimes ...\otimes \alpha_{n}\otimes \gamma\otimes \beta_1\otimes ...\otimes \beta_n\otimes b)=\\
 && a\varepsilon(\alpha_1)\otimes \alpha_2\otimes ...\otimes \alpha_{n}\otimes \gamma\beta_1\otimes \beta_2 \otimes... \otimes \beta_n\otimes b,\\
&& \; \;  \sigma_0^{\mathcal{A}}(a\otimes \alpha_1\otimes \alpha_2\otimes ...\otimes \alpha_{n}\otimes \gamma\otimes \beta_1\otimes ...\otimes \beta_n\otimes b)=\\
 && a\otimes 1\otimes \alpha_{1}\otimes ...\otimes \alpha_{n}\otimes \gamma\otimes 1\otimes \beta_1\otimes...\otimes \beta_n\otimes b,\\
&& \; \; \delta_i^{\mathcal{A}}(a\otimes \alpha_1\otimes \alpha_2\otimes ...\otimes \alpha_{n}\otimes \gamma\otimes \beta_1\otimes ...\otimes \beta_n\otimes b)=\\
 && a\otimes \alpha_1\otimes ...\otimes \alpha_i\alpha_{i+1}\otimes ...\otimes \gamma\otimes \beta_1\otimes ...\otimes \beta_i\beta_{i+1}\otimes ...\otimes \beta_n\otimes b,\\
 && \; \; \sigma_i^{\mathcal{A}}(a\otimes \alpha_1\otimes \alpha_2\otimes ...\otimes \alpha_{n}\otimes \gamma\otimes \beta_1\otimes ...\otimes \beta_n\otimes b)=\\
 && a\otimes \alpha_1\otimes ...\otimes \alpha_{i}\otimes 1 \otimes \alpha_{i+1}\otimes ...\otimes \gamma\otimes ...\otimes \beta_i\otimes 1\otimes \beta_{i+1}\otimes ...\otimes \beta_n\otimes b,\\
&& \; \; \delta_n^{\mathcal{A}}(a\otimes \alpha_1\otimes \alpha_2\otimes ...\otimes \alpha_{n}\otimes \gamma\otimes \beta_1\otimes ...\otimes \beta_n\otimes b)=\\
 && a\otimes \alpha_1\otimes ...\otimes \alpha_{n-1}\otimes \alpha_{n}\gamma\otimes \beta_1\otimes ...\otimes \beta_{n-1}\otimes \varepsilon(\beta_n)b,\\
&& \; \; \sigma_n^{\mathcal{A}}(a\otimes \alpha_1\otimes \alpha_2\otimes ...\otimes \alpha_{n}\otimes \gamma\otimes \beta_1\otimes ...\otimes \beta_n\otimes b)=\\
&& a\otimes \alpha_1\otimes ...\otimes \alpha_{n}\otimes 1 \otimes \gamma\otimes \beta_1\otimes ...\otimes \beta_{n}\otimes 1\otimes b,
\end{eqnarray*}
for $1\leq i\leq n-1$. Note that for $B=k$ we obtain $\mathcal{A}(A\otimes A^{op})$ as in Example \ref{presimpl}.
\end{example}

\begin{definition}
We say that $\mathcal{M}$ is a {\it pre-simplicial left module} over a pre-simplicial $k$-algebra $\mathcal{A}$ if $\mathcal{M}=(M_n)_{n\geq 0}$ is a pre-simplicial $k$-vector space  together with a left $A_n$-module structure on $M_n$, for each $n$, such that we have the natural compatibility conditions $\delta_i^{\mathcal{M}}(a_nm_n)=\delta_i^{\mathcal{A}}(a_n)\delta_i^{\mathcal{M}}(m_n)$,  for all $a_n\in A_n$ and $m_n\in M_n$.
\end{definition}

\begin{definition}
We say that $\mathcal{M}$ is a {\it simplicial left module} over a simplicial $k$-algebra $\mathcal{A}$ if $\mathcal{M}=(M_n)_{n\geq 0}$ is a simplicial $k$-vector space together with a left $A_n$-module structure on $M_n$, for each $n$, such that we have the natural compatibility conditions $\delta_i^{\mathcal{M}}(a_nm_n)=\delta_i^{\mathcal{A}}(a_n)\delta_i^{\mathcal{M}}(m_n)$ and $\sigma_i^{\mathcal{M}}(a_nm_n)=\sigma_i^{\mathcal{A}}(a_n)\sigma_i^{\mathcal{M}}(m_n)$, for all $a_n\in A_n$ and $m_n\in M_n$.
\end{definition}

\begin{remark} The above definitions can be easily adapted to (pre)simplicial right modules or bimodules. \end{remark}

\begin{example} If $A$ is a $k$-algebra and $M$ is a left $A$-module, we can construct the following (pre)simplicial left modules over $\mathcal{A}(A)$.

a) For each nonnegative integer $p$, let $\mathcal{M}(M(p))$ be the pre-simplicial module obtained by setting $M_p=M,$ $M_n=0$ if $n\neq p$, and  $\delta_i^{\mathcal{M}}=0$.  For $a_p\in A_p=A$ and $m_p\in M_p=M,$ the product $a_pm_p$ is given by the left $A$-module structure on $M$.

b) The simplicial left module $\mathcal{M}(M)$ is defined by setting  $M_n=M,$ $\delta_i^{\mathcal{M}}=id_M$, and $\sigma_i^{\mathcal{M}}=id_M$. For $a_n\in A_n=A$ and $m_n\in M_n=M,$ the product $a_nm_n$ is given by the left $A$-module structure on $M$.

c) If $M$ is an $A$-bimodule then $\mathcal{M}(M(p))$ and $\mathcal{M}(M)$ are (pre-)simplicial left modules over $\mathcal{A}(A\otimes A^{op})$
\end{example}

\begin{example}\label{rebar} If $A$ is a $k$-algebra then the bar resolution $\mathcal{B}(A)$, is a simplicial left module over $\mathcal{A}(A\otimes A^{op})$. We recall that $B_n=A^{\otimes (n+2)}$, the $A\otimes A^{op}$-module structure is given by
\begin{eqnarray*}
(a\otimes b)\cdot (a_0\otimes a_1\otimes ...\otimes a_{n+1})&=& aa_0\otimes a_1\otimes ...\otimes a_{n+1}b,
\end{eqnarray*}
and for $0\leq i\leq n$ we have
\begin{eqnarray*}\label{rebar}
\delta_i^{\mathcal{B}}(a_0\otimes a_1\otimes...\otimes a_{n+1})&=&a_0\otimes ...\otimes a_{i}a_{i+1}\otimes ...\otimes a_{n+1},\\
\sigma_i^{\mathcal{B}}(a_0\otimes a_1\otimes...\otimes a_{n+1})&=&a_0\otimes ...\otimes a_{i}\otimes 1\otimes a_{i+1}\otimes ...\otimes a_{n+1}.
\end{eqnarray*}
\end{example}
The next three examples arise from the context in which the secondary Hochschild cohomology is defined. Let $A$ be a $k$-algebra, $B$ a commutative $k$-algebra, $\varepsilon:B\to A$ a morphism of $k$-algebras such that $\varepsilon(B)\subset \mathcal{Z}(A)$, and $M$ an $A$-bimodule which is symmetric over $B$.

\begin{example}
 a) For each nonnegative integer $p$, we define the pre-simplicial left module $\mathcal{M}(M(p))$, over the simplicial $k$-algebra $\mathcal{A}(A,B,\varepsilon)$, by setting $M_p=M$, $M_n=0$ for $n\neq p$, $\delta_i=0$.  The only nontrivial multiplication is $A_p\times M_p\to M_p$  given by the formula
\begin{eqnarray}
&(a\otimes \alpha_1\otimes ...\otimes \alpha_p\otimes \gamma\otimes \beta_1\otimes...\otimes \beta_p\otimes b)\cdot m_p=a\varepsilon(\alpha_1...\alpha_{p}\gamma\beta_1...\beta_p)m_pb.&\label{bimodstructure1}
\end{eqnarray}
b) The simplicial bimodule $\mathcal{M}(M)$, over the simplicial $k$-algebra $\mathcal{A}(A,B,\varepsilon)$, is defined by taking $M_n=M$, $\delta_i=id_M$, and $\sigma_i=id_M$. The multiplication $A_n\times M_n\to M_n$  is given by the formula
\begin{eqnarray}
&(a\otimes \alpha_1\otimes ...\otimes \alpha_n\otimes \gamma\otimes \beta_1\otimes...\otimes \beta_n\otimes b)\cdot m_n=a\varepsilon(\alpha_1...\alpha_{n}\gamma\beta_1...\beta_n)m_nb.&
\end{eqnarray}
\end{example}

Our next example can be viewed as an analogue of the Hochschild bar resolution in the context of secondary Hochschild cohomology. This example will be used extensively in the next sections.
\begin{example}
The bar simplicial left module $\mathcal{B}(A,B,\varepsilon)$, over the simplicial $k$-algebra $\mathcal{A}(A,B,\varepsilon)$, is defined by setting $B_n=A^{\otimes (n+2)}\otimes B^{\otimes \frac{(n+1)(n+2)}{2}}$. For $0\leq i\leq n$ we define
\begin{eqnarray*}
&\delta_i^{\mathcal{B}}\left(\displaystyle\otimes
\left(
\begin{array}{cccccccc}
 a_{0}& b_{0,1} &  ...&b_{0,n+1}\\
1 & a_{1}    &...&b_{1,n+1}\\
. & .       &...&.\\
1& 1& ...&b_{n,n+1}\\
1 & 1& ...&a_{n+1}\\
\end{array}
\right)\right)=&\\
&\otimes
\left(
\begin{array}{cccccccc}
 a_{0}& b_{0,1} &  ...&b_{0,i}b_{0,i+1}&...&b_{0,n}&b_{0,n+1}\\
1 & a_{1}    &...&b_{1,i}b_{1,i+1}&...&b_{1,n}&b_{1,n+1}\\
. & .       &...&.&...&.&.\\
. & .       &...&a_ia_{i+1}\varepsilon(b_{i,i+1})&...&b_{i,n}b_{i+1,n}&b_{i,n+1}b_{i+1,n+1}\\
. & .       &...&.&...&.&.\\
1& 1 &...&.&...&a_n&b_{n,n+1}\\
1 & 1&...&.&...&1&a_{n+1}\\
\end{array}
\right),&\\
&\sigma_i^{\mathcal{B}}\left(\displaystyle\otimes
\left(
\begin{array}{cccccccc}
 a_{0}& b_{0,1} &  ...&b_{0,n+1}\\
1 & a_{1}    &...&b_{1,n+1}\\
. & .       &...&.\\
1& 1& ...&b_{n,n+1}\\
1 & 1& ...&a_{n+1}\\
\end{array}
\right)\right)=&\\
&
\otimes
\left(
\begin{array}{cccccccccc}
 a_{0}& b_{0,1} &  ...&b_{0,i}&1&b_{0,i+1}&...&b_{0,n}&b_{0,n+1}\\
1 & a_{1}    &...&b_{1,i}&1&b_{1,i+1}&...&b_{1,n}&b_{1,n+1}\\
. & .       &...&.&.&.&...&.&.\\
. & .       &...&a_{i}&1&b_{i,i+1}&...&b_{i,n}&b_{i,n+1}\\
1 & 1       &...&1&1&1&...&1&1\\
. & .       &...&1&1&a_{i+1}&...&b_{i+1,n}&b_{i+1,n+1}\\
. & .       &...&.&.&.&...&.&.\\
1& 1 &...&.&1&.&...&a_n&b_{n,n+1}\\
1 & 1&...&.&1&.&...&1&a_{n+1}\\
\end{array}
\right).
&
\end{eqnarray*}
Finally, the multiplication on $B_n$ is given by
\begin{eqnarray*}
&(a\otimes \alpha_1\otimes ...\otimes \alpha_{n}\otimes \gamma\otimes \beta_1\otimes ...\otimes \beta_n\otimes b)\left(\displaystyle\otimes
\left(
\begin{array}{cccccccc}
 a_{0}& b_{0,1} &  ...&b_{0,n+1}\\
1 & a_{1}    &...&b_{1,n+1}\\
. & .       &...&.\\
1& 1& ...&b_{n,n+1}\\
1 & 1& ...&a_{n+1}\\
\end{array}
\right)\right)=&\\
&
\otimes
\left(
\begin{array}{cccccccc}
a a_{0}& \alpha_{1}b_{0,1} &  ...&\alpha_nb_{0,n}&\gamma b_{0,n+1}\\
1 & a_{1}    &...&b_{1,n}&b_{1,n+1}\beta_1\\
. & .       &...&.&.\\
1& 1& ...&a_{n}&b_{n,n+1}\beta_n\\
1 & 1& ...&1&a_{n+1}b\\
\end{array}
\right).&
\end{eqnarray*}
 \label{exb}
\end{example}

\begin{remark} The bar construction described above can be formulated in terms of bimodules. This is the analog of the fact that an $A$-bimodule is  a left $A\otimes A^{op}$-module.  Our choice to use left modules turns out to be more convenient in the next section when we have to consider $Hom$ and $\otimes$ of simplicial modules over a simplicial algebra $\mathcal{A}$.
\end{remark}

\section{Secondary Cyclic Cohomology}

\subsection{Secondary cohomology of a triple $(A,B,\varepsilon)$}
In the arXiv version of \cite{sta} we introduced a certain cyclic module associated to a triple $(A,B,\varepsilon)$. That construction does not have all the properties which one would like from a cyclic cohomology, so the construction  was removed from the final version of the paper. In this section we take a different approach to the secondary cyclic cohomology, one which seems to be more consistent with the classical theory and  could have applications to geometry.

We will use the bar simplicial module $\mathcal{B}(A,B,\varepsilon)$ to  introduce the analogue of the Hochschild complex of $A$, with coefficients in $A^*=Hom_k(A, k)$, and then use it to define the secondary cyclic cohomology.

\begin{definition}
We say that $\mathcal{M}$ is a {\it co-simplicial left module} over a simplicial $k$-algebra $\mathcal{A}$ if $\mathcal{M}=(M^n)_{n\geq 0}$ is a co-simplicial $k$-vector space together with a left $A_n$-module structure on $M^n$, for each $n$, such that we have the natural compatibility conditions $\delta^i_{\mathcal{M}}(\delta_i^{\mathcal{A}}(a_{n+1})m_{n})=a_{n+1}\delta^i_{\mathcal{M}}(m_n)$ and $\sigma^i_{\mathcal{M}}(\sigma_i^{\mathcal{A}}(a_{n-1})m_n)=a_{n-1}\sigma^i_{\mathcal{M}}(m_n)$ for all $a_{n-1}\in A_{n-1}$, $a_{n+1}\in A_{n+1}$ and $m_n\in M^n$.
\end{definition}

For a simplicial left module  $(\mathcal{X}, \delta_i^\mathcal{X}, \sigma_i^\mathcal{X})$ and  a co-simplicial left module $(\mathcal{Y}, \delta_\mathcal{Y}^i, \sigma_\mathcal{Y}^i)$, both over the simplicial algebra $\mathcal{A}$, we define $\mathcal{M}^n:=Hom_{A_n}(X_n, Y^n)$ and the maps $D^i: \mathcal{M}^n\rightarrow \mathcal{M}^{n+1}$ and $S^i:\mathcal{M}^n\rightarrow \mathcal{M}^{n-1}$ by $$D^i(f)=\delta^i_{\mathcal{Y}}f\delta_i^{\mathcal{X}},$$
$$S^i(f)=\sigma^i_{\mathcal{Y}}f\sigma_i^{\mathcal{X}}.$$

With the above definitions we have the following lemma.
\begin{lemma} $(\mathcal{M}, D^i, S^i)$ is a co-simplicial $k$-module, which we will denote by $Hom_{\mathcal{A}}(\mathcal{X},\mathcal{Y})$. \label{lemma3}
\end{lemma}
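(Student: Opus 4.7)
The plan is to verify two things: first, that $D^i$ and $S^i$ actually land inside the prescribed Hom-spaces of $A_{n\pm 1}$-linear maps, and second, that once this is established the co-simplicial identities transfer mechanically from the simplicial identities on $\mathcal{X}$ and the co-simplicial identities on $\mathcal{Y}$.

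First I would check well-definedness. For $f\in Hom_{A_n}(X_n, Y^n)$, $a\in A_{n+1}$, and $x\in X_{n+1}$, I compute
\[
D^i(f)(ax)=\delta^i_{\mathcal{Y}} f\delta_i^{\mathcal{X}}(ax)=\delta^i_{\mathcal{Y}} f(\delta_i^{\mathcal{A}}(a)\,\delta_i^{\mathcal{X}}(x))=\delta^i_{\mathcal{Y}}(\delta_i^{\mathcal{A}}(a)\,f(\delta_i^{\mathcal{X}}(x)))=a\,D^i(f)(x),
\]
where the three equalities use, respectively, the compatibility condition for the simplicial module $\mathcal{X}$, the $A_n$-linearity of $f$, and the compatibility condition for the co-simplicial module $\mathcal{Y}$. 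Thus $D^i(f)\in Hom_{A_{n+1}}(X_{n+1}, Y^{n+1})=\mathcal{M}^{n+1}$. The argument for $S^i(f)\in\mathcal{M}^{n-1}$ is identical after replacing $\delta$'s by $\sigma$'s and invoking the corresponding compatibility condition for simplicial/co-simplicial modules.

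Next I would verify the five co-simplicial identities. The idea is uniform: $D^j D^i(f)=\delta^j_{\mathcal{Y}}\delta^i_{\mathcal{Y}} f\,\delta_i^{\mathcal{X}}\delta_j^{\mathcal{X}}$, and similarly with $\sigma$'s inserted in the appropriate places for the other identities. For example, for $i<j$, the simplicial identity $\delta_i^{\mathcal{X}}\delta_j^{\mathcal{X}}=\delta_{j-1}^{\mathcal{X}}\delta_i^{\mathcal{X}}$ together with the dual co-simplicial identity $\delta^j_{\mathcal{Y}}\delta^i_{\mathcal{Y}}=\delta^i_{\mathcal{Y}}\delta^{j-1}_{\mathcal{Y}}$ (both obtained by applying $\mathcal{X}$ and $\mathcal{Y}$ to the identity $d^jd^i=d^id^{j-1}$ in $\Delta$) yield
\[
D^j D^i(f)=\delta^i_{\mathcal{Y}}\delta^{j-1}_{\mathcal{Y}} f\,\delta_{j-1}^{\mathcal{X}}\delta_i^{\mathcal{X}}=D^i D^{j-1}(f).
\]
The remaining identities $S^jS^i=S^iS^{j+1}$ for $i\leq j$, $S^jD^i=D^iS^{j-1}$ for $i<j$, $S^jD^i=\mathrm{id}$ for $i=j$ or $i=j+1$, and $S^jD^i=D^{i-1}S^j$ for $i>j+1$ are obtained by the same sandwich argument, pushing the appropriate simplicial identity for $\mathcal{X}$ through the middle and the dual co-simplicial identity for $\mathcal{Y}$ through the outside.

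There is no genuine obstacle here; the proof is essentially a diagram-chase once well-definedness is secured. The only point that requires a moment of care is that the $A_n$-linearity step in the well-definedness calculation forces us to match the compatibility conditions of $\mathcal{X}$ and $\mathcal{Y}$ around $f$ in precisely the way these are stated in the definitions, which is why the given formulations of simplicial/co-simplicial modules over $\mathcal{A}$ (rather than, say, bimodule versions) are needed.
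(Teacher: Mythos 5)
Your proof is correct and is precisely the ``easy verification of the definition'' that the paper's proof consists of: the well-definedness check via the two compatibility conditions sandwiching the $A_n$-linearity of $f$, and the transfer of the (co)simplicial identities through the formulas $D^i(f)=\delta^i_{\mathcal{Y}}f\delta_i^{\mathcal{X}}$, $S^i(f)=\sigma^i_{\mathcal{Y}}f\sigma_i^{\mathcal{X}}$. Nothing is missing and no different route is taken.
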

\begin{proof}
An easy verification of the definition.
\end{proof}
\begin{example} We define a co-simplicial module $\mathcal{H}(A,B,\varepsilon)$ over $\mathcal{A}(A,B,\varepsilon)$ as follows. First, take $\mathcal{H}^n=Hom_k(A\otimes B^{\otimes n},k)$, with the $A_n$-left module structure given by
\begin{eqnarray*}
((a\otimes \alpha_1\otimes ...\otimes \alpha_{n}\otimes \gamma\otimes \beta_1\otimes ...\otimes \beta_n\otimes b)\cdot\phi) (a_0\otimes b_1\otimes ...\otimes b_n)=\\
\phi(ba_0a\varepsilon(\gamma)\otimes \alpha_1b_1\beta_1\otimes ...\otimes \alpha_nb_n\beta_n)
\end{eqnarray*}
Second, we define $\delta^i_{\mathcal{H}}:\mathcal{H}^n\to \mathcal{H}^{n+1}$  by
\begin{eqnarray*}
\delta^0_{\mathcal{H}}(\phi)(a\otimes b_1\otimes ...\otimes b_{n+1})&=&\phi(a\varepsilon(b_1)\otimes b_2\otimes ...\otimes b_{n+1}),\\
\delta^{n+1}_{\mathcal{H}}(\phi)(a\otimes b_1\otimes ...\otimes b_{n+1})&=&\phi(\varepsilon(b_{n+1})a\otimes b_1\otimes ...\otimes b_{n}),\\
\delta^i_{\mathcal{H}}(\phi)(a\otimes b_1\otimes ...\otimes b_{n+1})&=&\phi(a\otimes b_1\otimes ...\otimes b_ib_{i+1}\otimes...\otimes b_{n+1}),
\end{eqnarray*}
if $1\leq i\leq n$. Finally, we set $\sigma^i_{\mathcal{H}}:\mathcal{H}^n\to \mathcal{H}^{n-1}$ to be given by
\begin{eqnarray*}
\sigma^i_{\mathcal{H}}(\phi)(a\otimes b_1\otimes ...\otimes b_{n-1})=\phi(a\otimes b_1\otimes...\otimes b_{i}\otimes 1\otimes b_{i+1}\otimes  ...\otimes b_{n-1}),
\end{eqnarray*}
if $0\leq i\leq n-1$. \label{exh}
\end{example}
\begin{proof}
Straightforward computation.
\end{proof}

Now, if we combine the bar simplicial module $\mathcal{B}(A,B, \varepsilon)$ (Example \ref{exb}), the co-simplicial module $\mathcal{H}(A,B,\varepsilon)$ (Example \ref{exh}), and Lemma \ref{lemma3}, we get the following result.

\begin{proposition} $Hom_{\mathcal{A}(A,B,\varepsilon)}(\mathcal{B}(A,B,\varepsilon), \mathcal{H}(A,B,\varepsilon))$ is a co-simplicial $k$-module.  We denote the associated cohomology groups  by $HH^n(A,B,\varepsilon)$,  and we  call them the secondary Hochschild cohomology groups associated to the triple $(A,B,\varepsilon)$. \label{prop2}
\end{proposition}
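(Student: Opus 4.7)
The plan is to reduce Proposition \ref{prop2} to a direct application of Lemma \ref{lemma3}, taking $\mathcal{X} = \mathcal{B}(A,B,\varepsilon)$ from Example \ref{exb} and $\mathcal{Y} = \mathcal{H}(A,B,\varepsilon)$ from Example \ref{exh}. That lemma produces the required co-simplicial $k$-module structure once its hypotheses are met, so the real content is to confirm that $\mathcal{B}(A,B,\varepsilon)$ is genuinely a simplicial left module over $\mathcal{A}(A,B,\varepsilon)$ and that $\mathcal{H}(A,B,\varepsilon)$ is genuinely a co-simplicial left module over the same simplicial algebra. Neither fact was formally proved in Section~2; they were merely asserted in the examples, so verifying them is where all the work lies.

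First I would verify the simplicial structure on $\mathcal{B}(A,B,\varepsilon)$. The simplicial identities (\ref{pres}) and (\ref{simp}) for $\delta_i^{\mathcal{B}}$ and $\sigma_i^{\mathcal{B}}$ are checked by tracking how these maps act on the matrix of tensor factors: for non-adjacent indices $j > i+1$ the two face maps merge disjoint rows/columns of the matrix and commute with an obvious shift, while for $j = i+1$ one checks that both orders of composition produce the same entry $a_i a_{i+1} a_{i+2} \varepsilon(b_{i,i+1} b_{i,i+2} b_{i+1,i+2})$, using commutativity of $B$ and $\varepsilon(B) \subset \mathcal{Z}(A)$. Next I would check the $A_n$-module compatibility $\delta_i^{\mathcal{B}}(a_n \cdot T) = \delta_i^{\mathcal{A}}(a_n) \cdot \delta_i^{\mathcal{B}}(T)$: the multiplication formula on $B_n$ distributes $\alpha_1,\dots,\alpha_n$ across the top row of the matrix, $\beta_1,\dots,\beta_n$ down the rightmost column, $\gamma$ into the $(0,n+1)$ entry, and $a, b$ onto the corner entries $a_0, a_{n+1}$; applying $\delta_i^{\mathcal{B}}$ then merges adjacent $\alpha$'s, adjacent $\beta$'s, and adjacent $b$-entries, which matches exactly the action of $\delta_i^{\mathcal{A}}(a_n)$ on $\delta_i^{\mathcal{B}}(T)$. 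The degeneracy compatibility is verified analogously by inserting $1$'s in matching positions. In parallel, the co-simplicial structure on $\mathcal{H}(A,B,\varepsilon)$ is verified by routine dual bookkeeping on the argument $a \otimes b_1 \otimes \cdots \otimes b_n$: the coface maps merge adjacent $b_j$'s in the middle and absorb the outermost ones via $\varepsilon$, and the module compatibility conditions $\delta^i_{\mathcal{H}}(\delta_i^{\mathcal{A}}(a_{n+1}) \phi) = a_{n+1} \delta^i_{\mathcal{H}}(\phi)$ follow from the definition of the $A_n$-action together with centrality of $\varepsilon$.

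With both hypotheses confirmed, Lemma \ref{lemma3} applies directly to give the co-simplicial $k$-module structure on $Hom_{\mathcal{A}(A,B,\varepsilon)}(\mathcal{B}(A,B,\varepsilon), \mathcal{H}(A,B,\varepsilon))$, and $HH^n(A,B,\varepsilon)$ is then defined by the usual alternating-sum cochain complex associated to any co-simplicial $k$-module. The step I expect to be the main obstacle is the module-compatibility check for $\mathcal{B}$: the multiplication formula involves data distributed across three different regions of the matrix (top row, right column, and corner), and the face map $\delta_i^{\mathcal{B}}$ simultaneously merges two adjacent rows, two adjacent columns, and absorbs a diagonal $\varepsilon(b_{i,i+1})$ into the merged algebra entry. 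One must verify compatibility separately for the three cases $i=0$, $1 \leq i \leq n-1$, and $i=n$, since the formulas for $\delta_0^{\mathcal{A}}$ and $\delta_n^{\mathcal{A}}$ in Example on $\mathcal{A}(A,B,\varepsilon)$ behave differently from the middle faces. Everything else is routine once the commutativity of $B$ and the centrality of $\varepsilon(B)$ are invoked.
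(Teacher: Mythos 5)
Your proposal is correct and follows essentially the same route as the paper: the paper's proof is precisely the one-line observation that Proposition \ref{prop2} is obtained by feeding Example \ref{exb} and Example \ref{exh} into Lemma \ref{lemma3}, with the verifications of the simplicial-module and co-simplicial-module axioms relegated to the (unwritten) ``straightforward computation'' in those examples. Your write-up simply makes explicit the bookkeeping that the paper leaves implicit.
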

\begin{proof}
Follows from the above discussion.
\end{proof}

Next we will study in more details the  complex $C^n(A, B, \varepsilon)$, which defines the secondary Hochschild cohomology $HH^n(A,B,\varepsilon)$.
Notice that
$$C^n(A, B, \varepsilon)=Hom_{A_n}(B_n(A,B,\varepsilon), H^n(A,B,\varepsilon)).$$
From Proposition \ref{prop2} we have $d^i:C^n(A, B, \varepsilon)\rightarrow C^{n+1}(A, B, \varepsilon)$, defined by $d^i(f)=\delta^i_{\mathcal{H}}f\delta_i^{\mathcal{B}}$. More precisely, we have:

\begin{eqnarray*}
&d^{0}(f)\left(\displaystyle\otimes
\left(
\begin{array}{cccccccc}
 a_{0}& b_{01} &  ...&b_{0,n+2}\\
1 & a_{1}    &...&b_{1,n+2}\\
. & .       &...&.\\
1& 1& ...&b_{n+1,n+2}\\
1 & 1& ...&a_{n+2}\\
\end{array}
\right)\right)\left(
                \begin{array}{c}
                  a \\
                  \alpha_{1} \\
                  ...\\
                  \alpha_{n} \\
                  \alpha_{n+1} \\
                \end{array}
              \right)
=&\\
&f\left(\displaystyle\otimes
\left(
\begin{array}{cccccccc}
 a_{0}a_1\varepsilon(b_{0,1})& b_{0,2}b_{1,2} &  ...&b_{0,n+2}b_{1,n+2}\\
1 & a_{2}    &...&b_{1,n+2}\\
. & .       &...&.\\
. & .      &...&b_{n-1,n+2}\\
1 & 1&...&a_{n+2}\\
\end{array}
\right)\right)\left(
                \begin{array}{c}
                   a\varepsilon(\alpha_{1})\\
                  \alpha_{2} \\
                  . \\
									\alpha_{n}\\
                  \alpha_{n+1} \\
                \end{array}
              \right),&\\&d^i(f)\left(\displaystyle\otimes
\left(
\begin{array}{cccccccc}
 a_{0}& b_{01} &  ...&b_{0,n+2}\\
1 & a_{1}    &...&b_{1,n+2}\\
. & .       &...&.\\
1& 1& ...&b_{n+1,n+2}\\
1 & 1& ...&a_{n+2}\\
\end{array}
\right)\right)\left(
                \begin{array}{c}
                  a \\
                  \alpha_{1} \\
                  ...\\
                  \alpha_{n} \\
                  \alpha_{n+1} \\
                \end{array}
              \right)
=&\\
&f\left(\displaystyle\otimes
\left(
\begin{array}{cccccccc}
 a_{0}& b_{0,1} &  ...&b_{0,i}b_{0,i+1}&...&b_{0,n+2}\\
1 & a_{1}    &...&b_{1,i}b_{1,i+1}&...&b_{1,n+2}\\
. & .       &...&.&...&.\\
. & .       &...&a_ia_{i+1}\varepsilon(b_{i,i+1})&...&b_{i,n+2}b_{i+1,n+2}\\
. & .       &...&.&...&.\\
1 & 1&...&.&...&a_{n+2}\\
\end{array}
\right)\right)\left(
                \begin{array}{c}
                  a \\
                  \alpha_{1} \\
                  . \\
									\alpha_i\alpha_{i+1}\\
									.\\
                  \alpha_{n+1} \\
                \end{array}
              \right),&
\end{eqnarray*}
 for $1\leq i\leq n$, and
 \begin{eqnarray*}
&d^{n+1}(f)\left(\displaystyle\otimes
\left(
\begin{array}{cccccccc}
 a_{0}& b_{01} &  ...&b_{0,n+2}\\
1 & a_{1}    &...&b_{1,n+2}\\
. & .       &...&.\\
1& 1& ...&b_{n+1,n+2}\\
1 & 1& ...&a_{n+2}\\
\end{array}
\right)\right)\left(
                \begin{array}{c}
                  a \\
                  \alpha_{1} \\
                  ...\\
                  \alpha_{n} \\
                  \alpha_{n+1} \\
                \end{array}
              \right)
=&\\\end{eqnarray*} \begin{eqnarray*}
&f\left(\displaystyle\otimes
\left(
\begin{array}{cccccccc}
 a_{0}& b_{0,1} &  ...&b_{0,n+1}b_{0,n+2}\\
1 & a_{1}    &...&b_{1,n+1}b_{1,n+2}\\
. & .       &...&.\\
. & .      &...&b_{n,n+1}b_{n,n+2}\\
1 & 1&...&a_{n+1}a_{n+2}\varepsilon(b_{n+1,n+2})\\
\end{array}
\right)\right)\left(
                \begin{array}{c}
                   \varepsilon(\alpha_{n+1})a\\
                  \alpha_{1} \\
                  . \\
									\alpha_{n-1}\\
                  \alpha_{n} \\
                \end{array}
              \right).&
\end{eqnarray*}

The differential $\partial_n:C^n(A, B, \varepsilon)\rightarrow C^{n+1}(A, B, \varepsilon)$ is given by the formula $\partial_{n}=\sum_{i=0}^{n+1}(-1)^{i}d^i$.

Using the standard properties of adjoint functors we can identify $C^n(A, B, \varepsilon)$ with $Hom_k(A^{\otimes n}\otimes B^{\otimes \frac{n(n-1)}{2}}, Hom_k(A\otimes B^{\otimes n},k))$ or, even better, with $Hom_k(A^{\otimes (n+1)}\otimes B^{\otimes \frac{n(n+1)}{2}},k)$.
The identification is given by
\begin{eqnarray*}
\Psi_n:C^n(A,B,\varepsilon)\cong Hom_k(A^{\otimes (n+1)}\otimes B^{\otimes \frac{n(n+1)}{2}},k),
\end{eqnarray*}
\begin{eqnarray*}
&\Psi_n(f)\left(\displaystyle\otimes
\left(
\begin{array}{cccccccc}
 a_{0}& b_{01} &  ...&b_{0,n}\\
1 & a_{1}    &...&b_{1,n}\\
. & .       &...&.\\
1& 1& ...&b_{n-1,n}\\
1 & 1& ...&a_{n}\\
\end{array}
\right)\right)
=&\\
&f\left(\displaystyle\otimes
\left(
\begin{array}{cccccccc}
 1& 1 &  ...&1&1\\
1 & a_{1}    &...&b_{1,n+1}&1\\
. & .       &...&.&.\\
. & .       &...&b_{n-1,n}&1\\
. & .      &...&a_{n}&1\\
1 & 1&...&1&1\\
\end{array}
\right)\right)\left(
                \begin{array}{c}
                   a_0\\
                  b_{0,1} \\
                  . \\
									b_{0,n-1}\\
                  b_{0,n} \\
                \end{array}
              \right).&
\end{eqnarray*}

Combining all the results from this section, one can see that the induced differential, $b_n:C^n(A,B,\varepsilon)\to C^{n+1}(A,B,\varepsilon)$, has the following formula:
\begin{eqnarray*}
&(b_n\phi)\left(\displaystyle\otimes
\left(
\begin{array}{cccccccc}
 a_{0}& b_{0,1} &  ...&b_{0,n}&b_{0,n+1}\\
1 & a_{1}    &...&b_{1,n}&b_{1,n+1}\\
. & .       &...&.&.\\
1& 1& ...&a_n&b_{n,n+1}\\
1 & 1& ...&1&a_{n+1}\\
\end{array}
\right)\right)=&\\
&\sum_{i=0}^{n}(-1)^i\phi\left(\displaystyle\otimes
\left(
\begin{array}{cccccccc}
 a_{0}&...& b_{0,i}b_{0,i+1} &  ...&b_{0,n}&b_{0,n+1}\\
1 & ...&b_{1,i}b_{1,i+1}    &...&b_{1,n}&b_{1,n+1}\\
. &...& .       &...&.&.\\
1& ...&a_ia_{i+1}\varepsilon(b_{i,i+1})& ...&b_{i,n}b_{i+1,n}&b_{i,n+1}b_{i+1,n+1}\\
. &...& .       &...&.&.\\
1 & ...&1& ...&1&a_{n+1}\\
\end{array}
\right)\right)+&\\
&(-1)^{n+1}\phi\left(\displaystyle\otimes
\left(
\begin{array}{cccccccc} a_{n+1}a_{0}\varepsilon(b_{0,n+1})&b_{1,n+1}b_{0,1}&...& b_{i,n+1}b_{0,i} &  ...&b_{n,n+1}b_{0,n}\\
1 & a_1&...&b_{1,i}    &...&b_{1,n}\\
. &.&...& .       &...&.\\
1& .&...&a_i& ...&b_{i,n}\\
. &.&...& .       &...&.\\
1 & .&...&1& ...&a_{n}\\
\end{array}
\right)\right).&
\end{eqnarray*}

\begin{remark}
We just proved that the secondary Hochschild cohomology associated to the triple $(A,B,\varepsilon)$ is the homology of the complex $(C^n(A, B, \varepsilon), b_n)$. When $B=k$, we have that $\mathcal{H}^n=A^*$ and we recover the known fact that $HH^n(A)$ is the cohomology of the Hochschild complex with coefficients in $A^*$.
\end{remark}

\begin{remark} Let $M$ be an $A$-bimodule that is $B$-symmetric. We denote by $\mathcal{C}(M)$ the co-simplicial $\mathcal{A}(A,B, \varepsilon)$-module determined by $\mathcal{C}^n(M)=M$ with the obvious $A\otimes B^{\otimes (2n+1)}\otimes A^{op}$ left module structure,  $\delta_{\mathcal{C}}^i=id_M$ and $\sigma_{\mathcal{C}}^i=id_M$. Then from Lemma \ref{lemma3} we know that $Hom_{\mathcal{A}(A,B,\varepsilon)}(\mathcal{B}(A,B,\varepsilon), \mathcal{C}(M))$ is a co-simplicial $k$-module. One can see that the homology of the induced complex is the secondary Hochschild cohomology $H^{\bullet}((A,B,\varepsilon); M)$.
\end{remark}

\begin{remark}
Although similar in name, the reader should note the difference between $HH^\bullet(A,B,\varepsilon)$, the secondary Hochschild cohomology associated to the triple $(A, B, \varepsilon)$, and $H^\bullet((A, B,\varepsilon), -)),$ the secondary Hochschild cohomology of the triple $(A, B, \varepsilon)$. In general, the former is not a special case of the latter since the ``coefficient" module $\mathcal{H}^n$ varies with $n$.
\end{remark}

\subsection{Secondary Cyclic Cohomology}
There is a natural left action of the cyclic group $C_{n+1}=<\lambda>$ on $C^n(A,B,\varepsilon)$ (where $\lambda$  is the permutation $(0, 1, 2,\dots, n)$),  defined as follows:
\begin{eqnarray*}
&\lambda\cdot\phi\left(\displaystyle\otimes
\left(
\begin{array}{cccccccc}
 a_{0}& b_{0,1} &b_{0,2}&  ...&b_{0,n-1}&b_{0,n}\\
1 & a_{1}    &b_{1,2}&...&b_{1,n-1}&b_{1,n}\\
. & .       &.&...&.&.\\
1& 1& 1&...&a_{n-1}&b_{n-1,n}\\
1 & 1& 1&...&1&a_{n}\\
\end{array}
\right)\right)=&\\
&(-1)^{n}\phi\left(\displaystyle\otimes
\left(
\begin{array}{cccccccc}
 a_{n}& b_{0,n} &b_{1,n}&  ...&b_{n-2,n}&b_{n-1,n}\\
1 & a_{0}    &b_{0,1}&...&b_{0,n-2}&b_{0,n-1}\\
. & .       &.&...&.&.\\
1& 1& 1&...&a_{n-2}&b_{n-2,n-1}\\
1 & 1& 1&...&1&a_{n-1}\\
\end{array}
\right)\right).&
\end{eqnarray*}

Similarly, we have a right action of $C_{n+1}$ on $C^n(A, B, \varepsilon)$ ($\phi\lambda=\lambda^n\phi)$. We define the secondary cyclic complex as a sub-complex of $C^\bullet(A, B, \varepsilon)$.  A multilinear functional $\phi\in C^n(A, B, \varepsilon)$ is called a secondary cyclic $n$-cochain if it is invariant under the action of $C_{n+1}$. That is, $\lambda\phi=\phi$ (a condition equivalent to $\phi\lambda=\phi$).

The $k$-submodule of $C^n(A, B, \varepsilon)$ of secondary cyclic $n$-cochains is denoted by $C^n_{\lambda}(A,B,\varepsilon)$.
Note that the action of $\lambda$ determines an operator on $C^n(A,B,\varepsilon)$ and we have that $C^n_{\lambda}(A,B,\varepsilon)=\mathrm{Ker}(1-\lambda)$.

\begin{lemma} For every $n\geq 0$ we have $b_n(C^n_{\lambda}(A,B,\varepsilon))\subseteq C^{n+1}_{\lambda}(A,B,\varepsilon)$.
\end{lemma}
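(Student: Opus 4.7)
The plan is to imitate Connes' classical argument that cyclic cochains form a subcomplex of the Hochschild complex. Introduce the auxiliary operator
\[
b'_n(\phi) := \sum_{i=0}^{n}(-1)^{i}\, d^{\,i}(\phi),
\]
obtained from the displayed formula for $b_n$ by omitting the final, cyclically twisted summand $(-1)^{n+1}d^{\,n+1}$. The entire lemma reduces to the cochain identity
\[
(1-\lambda)\circ b_n \;=\; b'_n\circ (1-\lambda)
\]
as maps $C^n(A,B,\varepsilon)\to C^{n+1}(A,B,\varepsilon)$. Indeed, if $\phi\in C^n_\lambda(A,B,\varepsilon)=\ker(1-\lambda)$, then $(1-\lambda)b_n\phi = b'_n(1-\lambda)\phi = 0$, forcing $b_n\phi\in C^{n+1}_\lambda(A,B,\varepsilon)$.

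To establish the key identity, I would verify the following two face--cyclic compatibility relations:
\[
\lambda\, d^{\,0} \;=\; (-1)^{n+1}\, d^{\,n+1},\qquad \lambda\, d^{\,i} \;=\; -\,d^{\,i-1}\,\lambda \quad (1\le i\le n+1).
\]
Each is a direct computation on an arbitrary matrix argument $T$, using the description of $\lambda$ recalled in the excerpt (rotate the diagonal entries $(a_0,\dots,a_n)\mapsto(a_n,a_0,\dots,a_{n-1})$, relabel each $b_{i,j}$ according to the new row/column indexing, and prepend the sign $(-1)^n$) and the explicit definition of $d^{\,i}$ (merge the $i$-th and $(i+1)$-st rows/columns, turning the diagonal pair into $a_ia_{i+1}\varepsilon(b_{i,i+1})$ and combining off-diagonal pairs into $b_{i,k}b_{i+1,k}$ and $b_{k,i}b_{k,i+1}$). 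In every case, rotating and then merging at position $i$ produces the same reduced matrix as merging at position $i-1$ and then rotating. The boundary relation $\lambda d^{\,0}=(-1)^{n+1}d^{\,n+1}$ expresses the fact that the cyclic twist built into $d^{\,n+1}$ (the factor $a_{n+1}a_0\varepsilon(b_{0,n+1})$ in the $(0,0)$ slot together with the top-row entries $b_{j,n+1}b_{0,j}$) is precisely what arises from first rotating $T$ and then merging its new rows $0$ and $1$.

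Granting these, a short reindexing yields the desired identity:
\[
\lambda b_n \;=\; \sum_{i=0}^n(-1)^i\lambda d^{\,i} + (-1)^{n+1}\lambda d^{\,n+1} \;=\; (-1)^{n+1}d^{\,n+1} - \sum_{i=1}^n (-1)^i d^{\,i-1}\lambda + (-1)^{n}d^{\,n}\lambda \;=\; (-1)^{n+1}d^{\,n+1} + b'_n\lambda,
\]
and since $(-1)^{n+1}d^{\,n+1} = b_n - b'_n$, this rearranges to $(1-\lambda)b_n = b'_n(1-\lambda)$, which is the identity sought.

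The main obstacle is the first step: verifying the face--cyclic relations amounts to careful bookkeeping within the cumbersome matrix notation, because every off-diagonal entry $b_{i,j}$ must be tracked through both the cyclic rotation (which reindexes the $b$'s in a non-obvious way, sending old $b_{i,j}$ to position $(i+1,j+1)$ in the rotated matrix, and old $b_{k,n}$ to the new top row) and the face merge, and the resulting $\varepsilon$-factors must line up exactly with those produced by the twisted summand $d^{n+1}$. Once these identities are in hand, the rest of the argument is purely formal and requires no assumption on $\mathrm{char}(k)$.
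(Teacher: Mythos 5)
Your proposal is correct and follows essentially the same route as the paper: both reduce the lemma to the identity $(1-\lambda)b_n=b'_n(1-\lambda)$ with $b'_n=\sum_{i=0}^n(-1)^id^i$ (the first $n+1$ terms of $b_n$) and then use $C^n_\lambda(A,B,\varepsilon)=\mathrm{Ker}(1-\lambda)$. Your face--cyclic compatibility relations and the reindexing argument simply make explicit the verification that the paper leaves to the reader with the phrase ``similar to the case of cyclic cochains.''
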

\begin{proof}  Similar to the case of cyclic cochains, one can show that  $(1-\lambda)b_n=b_n'(1-\lambda)$, where $b_n'$ is the sum of the first $n+1$ terms from the definition of $b_n$. Since $C^n_{\lambda}(A,B,\varepsilon)=\mathrm{Ker}(1-\lambda)$, we obtain the stated result.

\end{proof}

\begin{definition} The homology of the complex $(C^\bullet_{\lambda}(A,B,\varepsilon) , b)$ is denoted by $HC^\bullet(A,B,\varepsilon)$
and is called the \begin{it}Secondary Cyclic Cohomology \end{it}associated to the triple $(A,B,\varepsilon).$
\end{definition}

Next we obtain a similar result to Connes' long exact sequence.

\begin{theorem}\label{Connes} Let $k$ be a field of characteristic zero. For a triple $(A,B,\varepsilon)$ we have a long exact sequence
$$...\rightarrow HC^n(A,B,\varepsilon)\stackrel{I}{\rightarrow}  HH^n(A,B,\varepsilon)\stackrel{B}{\rightarrow}  HC^{n-1}(A,B,\varepsilon)\stackrel{S}{\rightarrow}  HC^{n+1}(A,B,\varepsilon)\rightarrow ...,$$
where $I$ is induced by the inclusion $C^{n}_{\lambda}(A,B,\varepsilon)\to C^{n}(A,B,\varepsilon)$.
\end{theorem}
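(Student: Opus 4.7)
The plan is to adapt Connes' classical argument for cyclic cohomology to the secondary setting. The previous lemma already establishes the key commutation relation $(1-\lambda)b_n = b_n'(1-\lambda)$ on $C^\bullet(A,B,\varepsilon)$, which, together with $b$, $b'$, $\lambda$, and the norm operator $N_n = 1 + \lambda + \lambda^2 + \cdots + \lambda^n$, promotes the collection $(C^n(A,B,\varepsilon))_{n \geq 0}$ to the formal structure of a cyclic $k$-module. First I would verify the remaining Connes-type identities $b_n N_n = N_{n+1}\, b_n'$ and $N_n(1-\lambda) = (1-\lambda)N_n = 0$; these are formal consequences of $\lambda^{n+1} = \operatorname{id}_{C^n}$ together with the commutation already in hand. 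Since $\operatorname{char}(k) = 0$, semisimplicity of $k[C_{n+1}]$ then yields the direct sum decomposition $C^n = C^n_\lambda \oplus \operatorname{im}(1-\lambda)$ together with the identities $\operatorname{im}(N_n) = C^n_\lambda = \ker(1-\lambda)$ and $\ker(N_n) = \operatorname{im}(1-\lambda)$.

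The technical heart of the proof is establishing acyclicity of the auxiliary complex $(C^\bullet(A,B,\varepsilon), b')$. I would construct an explicit contracting homotopy $h_n \colon C^n \to C^{n-1}$ built from an extra degeneracy available on the bar simplicial module $\mathcal{B}(A,B,\varepsilon)$: concretely, given $\phi \in C^n$, define $h_n(\phi)$ by pre-composing with the operation that inserts a unit entry $a_0 = 1$ together with a leading row of $1$'s in the matrix-shaped domain, and verify through the face and degeneracy identities implicit in Example \ref{exb} that $b'_{n-1} h_n + h_{n+1} b'_n = \operatorname{id}_{C^n}$. I expect this to be the main obstacle: the multi-index matrix bookkeeping in the secondary setting makes the sign and cancellation checks tedious, though the underlying simplicial identities from the bar construction render the computation essentially mechanical.

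With those ingredients in hand, assemble the Connes bicomplex $CC^{\bullet,\bullet}(A,B,\varepsilon)$, with columns alternating between $(C^\bullet, b)$ and $(C^\bullet, -b')$ and horizontal maps alternating $1-\lambda$ and $N$. The acyclicity of $b'$ together with the filtration-by-columns spectral sequence identifies the total cohomology of $CC^{\bullet,\bullet}$ with $HC^\bullet(A,B,\varepsilon)$. The standard short exact sequence of bicomplexes obtained by removing the first two columns then induces, in cohomology, precisely the desired Connes long exact sequence: the map $I$ is induced by the inclusion $C^\bullet_\lambda \hookrightarrow C^\bullet$; the boundary morphism $B$ arises as a zig-zag through the bicomplex involving $N$; and the periodicity operator $S$ is given by the two-column shift, yielding the degree shift $HC^{n-1} \to HC^{n+1}$ stated in the theorem.
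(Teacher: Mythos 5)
Your proposal is correct and rests on exactly the same three ingredients as the paper's proof: the commutation relations $(1-\lambda)b=b'(1-\lambda)$, $bN=Nb'$, $N(1-\lambda)=(1-\lambda)N=0$; the characteristic-zero identities $\ker N=\operatorname{im}(1-\lambda)$ and $\operatorname{im}N=\ker(1-\lambda)=C^{\bullet}_{\lambda}(A,B,\varepsilon)$; and the acyclicity of $(C^{\bullet}(A,B,\varepsilon),b')$ via an extra degeneracy that inserts a unit slot into the matrix-shaped argument. The difference is in how these are assembled. The paper never forms the cyclic bicomplex: it splices two short exact sequences of ordinary complexes, first $0\to C^{\bullet}_{\lambda}\to C^{\bullet}\to C^{\bullet}/C^{\bullet}_{\lambda}\to 0$ (whose long exact sequence already produces $I$), and then $0\to C^{\bullet}/C^{\bullet}_{\lambda}\stackrel{1-\lambda}{\longrightarrow}(C^{\bullet},b')\stackrel{N}{\longrightarrow}C^{\bullet}_{\lambda}\to 0$, whose long exact sequence combined with $H^{n}(C^{\bullet},b')=0$ yields the degree-shift identification $H^{n}(C^{\bullet}/C^{\bullet}_{\lambda})\simeq HC^{n-1}(A,B,\varepsilon)$. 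You instead build the full Connes bicomplex and extract the sequence from the two-column truncation; this makes the periodicity operator $S$ transparent as a shift, at the cost of a spectral-sequence (or exact-rows) argument identifying the total cohomology with $HC^{\bullet}$. Two minor discrepancies, neither a gap: the paper proves $\ker N\subseteq\operatorname{im}(1-\lambda)$ by an explicit preimage ($\psi=\phi-\lambda^{2}\phi-2\lambda^{3}\phi-\dots-(n-1)\lambda^{n}\phi$ satisfies $(1-\lambda)\bigl(\tfrac{1}{n+1}\psi\bigr)=\phi$) where you invoke Maschke, and the paper's contracting homotopy appends the unit slot at the end of the matrix rather than at the front; both variants work in characteristic zero.
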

\begin{proof}
We will follow the line of proof from \cite{k} and \cite{lo}. Note that the short exact sequence of complexes
$$0\to C^{\bullet}_{\lambda}(A,B,\varepsilon)\to C^{\bullet}(A,B,\varepsilon)\to \frac{C^{\bullet}(A,B,\varepsilon)}{C^{\bullet}_{\lambda}(A,B,\varepsilon)}\to 0$$
induces a long exact sequence
\begin{eqnarray*}
...\to HC^{n}(A,B,\varepsilon)\stackrel{I}{\rightarrow} HH^{n}(A,B,\varepsilon)\to H^{n}(\frac{C^{\bullet}(A,B,\varepsilon)}{C^{\bullet}_{\lambda}(A,B,\varepsilon)})\to HC^{n+1}(A,B,\varepsilon)\to....
\end{eqnarray*}

In order to get the statement from the theorem we need to show that \begin{center} $HC^{n-1}(A,B,\varepsilon)\simeq H^n(C^{\bullet}(A,B,\varepsilon)/C^{\bullet}_{\lambda}(A,B,\varepsilon))$.\end{center}

Take $N=1+\lambda+...+\lambda^n:C^n(A,B,\varepsilon)\to C^n(A,B,\varepsilon)$. One can check that $(1-\lambda)b=b'(1-\lambda)$, $N(1-\lambda)=(1-\lambda)N=0,$ and $bN=Nb'$, so we obtain the short exact sequence of complexes:
$$0\to \frac{C^{\bullet}(A,B,\varepsilon)}{C^{\bullet}_{\lambda}(A,B,\varepsilon)} \stackrel{1-\lambda}{\rightarrow} (C^{\bullet}(A,B,\varepsilon),b')\stackrel{N}{\rightarrow} C^{\bullet}_{\lambda}(A,B,\varepsilon)\to 0.$$
The only part which requires an additional explanation  is the inclusion  $\mathrm{Ker}(N)\subset \mathrm{Im}(1-\lambda).$ If $\phi\in\mathrm{Ker}(N)$, then let $\psi=\phi-\lambda^2\phi-2\lambda^3\phi-\dots-(n-1)\lambda^n\phi$. It is then easy to see that we have $(1-\lambda)\left(\frac{1}{n+1}\psi\right)=\phi$.

Finally, the complex $(C^{\bullet}(A,B,\varepsilon),b')$ has the contracting homotopy \begin{center} $s_nf\left(\displaystyle\otimes
\left(
\begin{array}{cccccccc}
 a_{0}& b_{0,1} &  ...&b_{0,n-1}\\
1 & a_{1}       & ...&b_{1,n-1}\\
. & .       &...&.\\
1 & 1& ...&a_{n-1}\\
\end{array}
\right)\right)=(-1)^{n-1}f\left(\displaystyle\otimes
\left(
\begin{array}{cccccccc}
 a_{0}& b_{0,1} &  ...&b_{0,n-1}&1\\
1 & a_{1}       & ...&b_{1,n-1}&1\\
. & .       &...&.&1\\
1 & 1& ...&a_{n-1}&1\\
1 & 1& ... &1& 1\\
\end{array}
\right)\right).$\end{center}
Thus, since $H^n(C^{\bullet}(A,B,\varepsilon),b')=0$, we get an isomorphism
$HC^{n-1}(A,B,\varepsilon)\simeq H^n(C^{\bullet}(A,B,\varepsilon)/C^{\bullet}_{\lambda}(A,B,\varepsilon))$ and our proof is now complete.
\end{proof}

\begin{remark} Theorem \ref{Connes} is a consequence of the fact that $C^{n}_{\lambda}(A,B,\varepsilon)$ is a cyclic object.
\end{remark}

\section{Secondary Homology}
In this section we give a brief description for the secondary (cyclic) homology associated to a triple $(A,B,\varepsilon)$.

We will use the bar simplicial  module $\mathcal{B}(A,B,\varepsilon)$ over the simplicial algebra $\mathcal{A}(A,B,\varepsilon)$.
The first step is to prove an analog of the Lemma \ref{lemma3} replacing the $Hom_{\mathcal{A}}$ functor with the tensor product $\otimes_{\mathcal{A}}$.

Let $(\mathcal{X}, \delta_i^\mathcal{X}, \sigma_i^\mathcal{X})$ be a right simplicial module and $(\mathcal{Y},\delta_i^{\mathcal{Y}}, \sigma_i^{\mathcal{Y}})$ a left simplicial module, both over a simplicial algebra $\mathcal{A}$. We set $\mathcal{M}_n:=X_n\otimes_{A_n}Y_n$ and define the maps $D_i:\mathcal{M}_n\to\mathcal{M}_{n-1}$  and $S_i:\mathcal{M}_n\to \mathcal{M}_{n+1}$ by the formulas
$$D_i=\delta_i^{\mathcal{X}}\otimes\delta_i^{\mathcal{Y}}:X_n\otimes_{A_n}Y_n\to X_{n-1}\otimes_{A_{n-1}}Y_{n-1}$$
$$S_i=\sigma_i^{\mathcal{X}}\otimes\sigma_i^{\mathcal{Y}}:X_n\otimes_{A_n}Y_n\to X_{n+1}\otimes_{A_{n+1}}Y_{n+1}.$$
One can check that these maps are well defined. Moreover, we have the following lemma.

\begin{lemma} \label{lemma4}$(\mathcal{M}, D_i, S_i)$ is a simplicial $k$-module, which we'll denote by $\mathcal{X}\otimes_{\mathcal{A}} \mathcal{Y}$.
\end{lemma}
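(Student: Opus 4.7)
The plan is to prove Lemma \ref{lemma4} in two stages: first, verify that the maps $D_i$ and $S_i$ descend to the balanced tensor product $X_n \otimes_{A_n} Y_n$, and second, check that the collection $(\mathcal{M}_n, D_i, S_i)$ satisfies the simplicial identities (\ref{pres}) and (\ref{simp}).

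For well-definedness, I would argue as follows. The tensor product $X_n \otimes_{A_n} Y_n$ is the quotient of $X_n \otimes_k Y_n$ by relations of the form $x_n \cdot a_n \otimes y_n - x_n \otimes a_n \cdot y_n$, where $x_n \in X_n$, $y_n \in Y_n$, and $a_n \in A_n$. Applying $D_i = \delta_i^{\mathcal{X}} \otimes \delta_i^{\mathcal{Y}}$ to such a relation and using the compatibility conditions from the definition of (pre-)simplicial modules (namely $\delta_i^{\mathcal{X}}(x_n \cdot a_n) = \delta_i^{\mathcal{X}}(x_n) \cdot \delta_i^{\mathcal{A}}(a_n)$ and $\delta_i^{\mathcal{Y}}(a_n \cdot y_n) = \delta_i^{\mathcal{A}}(a_n) \cdot \delta_i^{\mathcal{Y}}(y_n)$), the image collapses in $X_{n-1} \otimes_{A_{n-1}} Y_{n-1}$ because the factor $\delta_i^{\mathcal{A}}(a_n) \in A_{n-1}$ can be moved across the balanced tensor. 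The verification for $S_i$ is the same, replacing $\delta$ by $\sigma$ throughout and using the second compatibility condition.

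For the simplicial identities, the key observation is that $D_i$ and $S_i$ act diagonally on the tensor factors, so any identity of the form $D_i D_j = D_{j-1} D_i$, $S_i S_j = S_{j+1} S_i$, $D_i S_j = S_{j-1} D_i$, $D_i S_j = \mathrm{id}$, or $D_i S_j = S_j D_{i-1}$ reduces to the corresponding identity applied separately in each component. Since $\mathcal{X}$ and $\mathcal{Y}$ are simplicial modules, their face and degeneracy operators each satisfy (\ref{pres}) and (\ref{simp}), so the identities for $D_i$ and $S_i$ follow immediately upon evaluating on an elementary tensor $x_n \otimes y_n$.

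The only step that requires any care is the well-definedness on the balanced tensor product, which hinges precisely on the compatibility conditions built into the definition of a simplicial module over a simplicial algebra; once that is in hand, the simplicial identities are a purely formal consequence. I do not expect any real obstacle beyond bookkeeping.
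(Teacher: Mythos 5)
Your proposal is correct and matches the paper's approach: the paper's proof is simply ``An easy verification of the definitions,'' and the two steps you spell out (well-definedness on the balanced tensor product via the module compatibility conditions, then the simplicial identities componentwise) are exactly that verification.
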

\begin{proof} An easy verification of the definitions.
\end{proof}
The next two examples, obtained by specializing the above construction, lead to the definition of the secondary Hochschild homology and the secondary cyclic homology.

\begin{example} Let $M$ be an $A$-bimodule that is $B$-symmetric. We denote with $\mathcal{S}(M)$ the simplicial $\mathcal{A}(A,B, \varepsilon)$-module determined by $S_n:=M$ with the obvious $A\otimes B^{\otimes (2n+1)}\otimes A^{op}$-right module structure,  $\delta^{\mathcal{S}}_i=id_M$ and $\sigma^{\mathcal{S}}_i=id_M$. Then, from Lemma \ref{lemma4} we know that $\mathcal{S}(M)\otimes_{\mathcal{A}(A,B,\varepsilon)}\mathcal{B}(A,B,\varepsilon)$ is a simplicial $k$-module. \label{ex5}
\end{example}

\begin{definition}
The homology of the complex associated to the simplicial $k$-module from Example \ref{ex5} is called the secondary Hochschild homology of the triple $(A,B,\varepsilon)$ with coefficients in $M$ and it is denoted by  $H_{\bullet}((A,B,\varepsilon), M)$.
\end{definition}

\begin{remark} If we make the identification $S_n\otimes_{A_n}B_n=M\otimes A^{\otimes n}\otimes B^{\otimes \frac{n(n-1)}{2}}$, one can easily see that differential is given by the formula

\begin{eqnarray*}
&\partial_n\left(\displaystyle m\otimes
\left(
\begin{array}{cccccccc}
 a_{0}& b_{0,1} & ...& b_{0,i} & ...&b_{0,n-1}\\
1 & a_{1} &...&b_{1,i}   &...&b_{1,n-1}\\
. & . &...&.      &...&.&\\
1& 1&...&a_{i}& ...&b_{i,n-1}\\
. & . &...&.      &...&.&\\
1 & 1& ...&.&...&a_{n-1}\\
\end{array}
\right)\right)=&\\& \;&\\
&
ma_0\varepsilon(b_{0,1}b_{0,2}...b_{0,n-1})\left(\displaystyle\otimes
\left(
\begin{array}{cccccc}
 a_{1}       & b_{1,2} &...&b_{1,i}&...&b_{1,n-1}\\
1  & a_{2} &...    &b_{2,i}&...&b_{2,n-1}\\
 .       &. &...&.\\
1& 1&...&a_{i}& ...&b_{i,n-1}\\
  .       &. &...&.\\
 1& 1  &...&1&...&a_{n-1}\\
\end{array}
\right)\right)+&\\
&\;&\;\\ \end{eqnarray*}
\begin{eqnarray*}
&\sum_{i=1}^{n-1}(-1)^im\otimes\displaystyle
\left(
\begin{array}{cccccccc}
 a_{0}&b_{0,1}&...& b_{0,i-1}b_{0,i} &  ...&b_{0,n-1}\\
1 & a_{1}&...&b_{1,i-1}b_{1,i}    &...&b_{1,n-1}\\
. &.& ...       &.&...&.\\
1& 1&...&a_{i-1}a_{i}\varepsilon(b_{i-1,i})& ...&b_{i-1,n-1}b_{i,n-1}\\
. &.&...& .       &...&.\\
1 &1& ...&1& ...&a_{n-1}\\
\end{array}
\right)+&\\
&(-1)^{n}a_{n-1}m\varepsilon(b_{0,n-1}b_{1,n-1}...b_{n-2,n-1})\displaystyle\otimes
\left(
\begin{array}{cccccccc} a_{0}&b_{0,1}&...& b_{0,i} &  ...&b_{0,n-1}\\
1 & a_1&...&b_{1,i}    &...&b_{1,n-1}\\
. &.&...& .       &...&.\\
1& .&...&a_i& ...&b_{i,n-1}\\
. &.&...& .       &...&.\\
1 & .&...&1& ...&a_{n-1}\\

\end{array}
\right).&
\end{eqnarray*}

In particular, we get that $H_0((A,B, \varepsilon);M)=H_0(A,M)$.
\end{remark}

\begin{example} We introduce the simplicial $\mathcal{A}(A,B,\varepsilon)$ right module $\mathcal{L}(A,B,\varepsilon)$ by setting $L_n=A\otimes B^{\otimes n}$. The $A_n$-module structure on $L_n$ is given by
\begin{eqnarray*}
(a_0\otimes b_1\otimes ...\otimes b_n)\cdot (a\otimes \alpha_1\otimes ...\otimes \alpha_{n}\otimes\gamma\otimes \beta_1\otimes...\otimes \beta_n\otimes b)=\\
ba_0a\varepsilon(\gamma)\otimes \alpha_1b_1\beta_1\otimes ...\otimes \alpha_nb_n\beta_n
\end{eqnarray*}
The simplicial structure maps, $\delta_i^{\mathcal{L}}:L_n\to L_{n-1}$ and $\sigma_i^{\mathcal{L}}:L_n\to L_{n+1}$, are defined  by
\begin{eqnarray*}
\delta_0^{\mathcal{L}}(a\otimes b_1\otimes ...\otimes b_{n})&=&a\varepsilon(b_1)\otimes b_2\otimes ...\otimes b_{n},\\
\delta_{n}^{\mathcal{L}}(a\otimes b_1\otimes ...\otimes b_{n})&=&\varepsilon(b_{n})a\otimes b_1\otimes ...\otimes b_{n-1},\\
\delta_i^{\mathcal{L}}(a\otimes b_1\otimes ...\otimes b_{n})&=&a\otimes b_1\otimes ...\otimes b_ib_{i+1}\otimes...\otimes b_{n},
\end{eqnarray*}
if $1\leq i\leq n-1$, and \begin{eqnarray*}
\sigma_i^{\mathcal{L}}(a\otimes b_1\otimes ...\otimes b_{n})=a\otimes b_1\otimes...\otimes b_{i}\otimes 1\otimes b_{i+1}\otimes  ...\otimes b_{n},
\end{eqnarray*}
if $0\leq i\leq n$. \label{exh2}
\end{example}
\begin{proof}
Straightforward computation.
\end{proof}

\begin{definition}
The homology of the complex associated to the simplicial $k$-module $\mathcal{L}(A,B,\varepsilon)\otimes_{\mathcal{A}(A,B,\varepsilon)} \mathcal{B}(A,B,\varepsilon)$ is called the secondary Hochschild homology associated to the triple $(A,B,\varepsilon)$  and it is denoted by  $HH_{\bullet}(A,B,\varepsilon)$.
\end{definition}

\begin{remark} We can make the identification  $L_n\otimes_{A_n}B_n=A^{\otimes (n+1)}\otimes B^{\otimes \frac{n(n+1)}{2}}$. If we set $C_n(A, B, \varepsilon):=A^{\otimes (n+1)}\otimes B^{\otimes \frac{n(n+1)}{2}}$ and denote the induced differential by $b$, one can see that $HH_\bullet(A, B, \varepsilon)$ is the homology of the complex $(C_\bullet(A, B, \varepsilon), b)$, where \begin{eqnarray*}
&b_n\left(\displaystyle\otimes
\left(
\begin{array}{cccccccc}
 a_{0}& b_{0,1} &  ...&b_{0,n-1}&b_{0,n}\\
1 & a_{1}    &...&b_{1,n-1}&b_{1,n}\\
. & .       &...&.&.\\
1& 1& ...&a_{n-1}&b_{n-1,n}\\
1 & 1& ...&1&a_{n}\\
\end{array}
\right)\right)=&\\ \end{eqnarray*}
\begin{eqnarray*}
&\sum_{i=0}^{n-1}(-1)^i\left(\displaystyle\otimes
\left(
\begin{array}{cccccccc}
 a_{0}&...& b_{0,i}b_{0,i+1} &  ...&b_{0,n-1}&b_{0,n}\\
1 & ...&b_{1,i}b_{1,i+1}    &...&b_{1,n-1}&b_{1,n}\\
. &...& .       &...&.&.\\
1& ...&a_ia_{i+1}\varepsilon(b_{i,i+1})& ...&b_{i,n-1}b_{i+1,n-1}&b_{i,n}b_{i+1,n}\\
. &...& .       &...&.&.\\
1 & ...&1& ...&1&a_{n}\\
\end{array}
\right)\right)+&\\
&(-1)^{n}\left(\displaystyle\otimes
\left(
\begin{array}{cccccccc} a_{n}a_{0}\varepsilon(b_{0,n})&b_{1,n}b_{0,1}&...& b_{i,n}b_{0,i} &  ...&b_{n-1,n}b_{0,n-1}\\
1 & a_1&...&b_{1,i}    &...&b_{1,n-1}\\
. &.&...& .       &...&.\\
1& .&...&a_i& ...&b_{i,n-1}\\
. &.&...& .       &...&.\\
1 & .&...&1& ...&a_{n-1}\\

\end{array}
\right)\right).&
\end{eqnarray*}\end{remark}

\begin{remark}
If $A$ is a commutative algebra, $HH_0(A,B,\varepsilon)=HH_0(A)$.
\end{remark}

Just like in the case of $C^n(A, B, \varepsilon)$, there are natural left and right actions of the cyclic group $C_{n+1}=<\lambda>$ on $C_n(A, B, \varepsilon)$. More precisely, the left action is given by
\begin{eqnarray*} &\lambda \cdot\left(\displaystyle\otimes
\left(
\begin{array}{cccccccc}
 a_{0}& b_{0,1} &b_{0,2}&  ...&b_{0,n-1}&b_{0,n}\\
1 & a_{1}    &b_{1,2}&...&b_{1,n-1}&b_{1,n}\\
. & .       &.&...&.&.\\
1& 1& 1&...&a_{n-1}&b_{n-1,n}\\
1 & 1& 1&...&1&a_{n}\\
\end{array}
\right)\right)=\\
&(-1)^{n}\left(\displaystyle\otimes
\left(
\begin{array}{cccccccc}
 a_{n}& b_{0,n} &b_{1,n}&  ...&b_{n-2,n}&b_{n-1,n}\\
1 & a_{0}    &b_{0,1}&...&b_{0,n-2}&b_{0,n-1}\\
. & .       &.&...&.&.\\
1& 1& 1&...&a_{n-2}&b_{n-2,n-1}\\
1 & 1& 1&...&1&a_{n-1}\\
\end{array}
\right)\right).&
\end{eqnarray*}

We define the complex $C_n^\lambda(A, B, \varepsilon):=C_n(A, B, \varepsilon)/Im(1-\lambda)$. If $b'_n$ denotes the sum of the first $n$ terms in the formula defining $b_n$, the relation $(1-\lambda)b'_n=b_n(1-\lambda)$ holds, so the operator $b$ is well defined on $C_\bullet^\lambda(A, B, \varepsilon)$. With these considerations, we introduce the cyclic homology associated to the triple $(A, B, \varepsilon)$.

\begin{definition} The  homology of the complex $(C_\bullet^\lambda(A, B, \varepsilon), b)$ is denoted by $HC_{\bullet}(A,B,\varepsilon)$ and is called the
cyclic homology associated to the triple $(A, B, \varepsilon)$.
\end{definition}

\begin{remark} If $n=0$, $HC_0(A, B, \varepsilon)=A/[A, A]$.\end{remark}

Similar to the case of the classical cyclic homology, we have a long exact sequence which relates the secondary Hochschild and cyclic homology groups associated to the triple $(A, B, \varepsilon)$. Our approach in proving this result is based on a natural extension of the cyclic bicomplex of $A$. Then we follow the same line of proof as in \cite{k2}, so we won't reproduce all the details here.

For $N=1+\lambda+\lambda^2+\cdots+\lambda^n$, we have the relations $N(1-\lambda)=(1-\lambda)N=0$, $Nb=b'N,$ $(1-\lambda)b'=b(1-\lambda)$ and $b^2=b'^2=0$. This implies that we have the following double complex, which we denote by $C(A, B, \varepsilon)$.
$$\xymatrix{\ar[d]_b&\ar[d]_{-b'}&\ar[d]_b\\
C_2(A, B, \varepsilon)\ar[d]_b&C_2(A, B, \varepsilon)\ar[l]_-{1-\lambda}\ar[d]_{-b'}&C_2(A, B, \varepsilon)\ar[l]_{N}\ar[d]_{b'}&\cdots\ar[l]_-{1-\lambda}\\
C_1(A, B, \varepsilon)\ar[d]_b&C_1(A, B, \varepsilon)\ar[d]_{-b'}\ar[l]_-{1-\lambda}&C_1(A, B, \varepsilon)\ar[d]_{b}\ar[l]_-N&\cdots\ar[l]_-{1-\lambda}\\
C_0(A, B, \varepsilon)&C_0(A, B, \varepsilon)\ar[l]_{1-\lambda}&C_0(A, B, \varepsilon)\ar[l]\ar[l]_N&\cdots\ar[l]_-{1-\lambda}}$$

In characteristic 0, we have that $Ker(1-\lambda)= Im N$ and $Ker N=Im(1-\lambda)$ so the rows are exact. It follows that the homology of the total complex, $(TotC(A, B, \varepsilon), \delta)$, is exactly the cyclic homology associated to the triple $(A, B, \varepsilon)$. Moreover, following \cite{k2}, we have a short exact sequence of complexes $$0\to Tot'_\bullet C(A,B,\varepsilon)\stackrel{i}{\rightarrow} Tot_\bullet C(A,B,\varepsilon)[2]\stackrel{s}{\rightarrow} Tot_\bullet C(A,B,\varepsilon)\to 0,$$
where $Tot_\bullet C(A, B, \varepsilon)[2]$ is the shifted by 2 complex and the truncation map $s$ is defined by $$s(x_0,x_1,\cdots, x_n)=(x_0, x_1,\cdots x_{n-2}).$$ The kernel of $s$ is the total complex of the first two columns of $C(A, B, \varepsilon),$ $$Tot'_nC(A, B, \varepsilon)=A^{\otimes n}\otimes B^{\otimes\frac{(n-1)n}{2}}\oplus A^{\otimes(n-1)}\otimes B^{\otimes\frac{(n-2)(n-1)}{2}} .$$ It is not hard to see that the complexes $(C_\bullet(A, B, \varepsilon), b)$ and $Tot_\bullet' C(A, B, \varepsilon)$ have the same homology since they are homotopy equivalent via the map \begin{center} $t_n\left(\displaystyle\otimes
\left(
\begin{array}{cccccccc}
 a_{0}& b_{0,1} &  ...&b_{0,n-1}\\
1 & a_{1}       & ...&b_{1,n-1}\\
. & .       &...&.\\
1 & 1& ...&a_{n-1}\\
\end{array}
\right)\right)=\displaystyle\otimes
\left(
\begin{array}{cccccccc}
1  & 1 & 1 &...&1\\
1& a_{0}& b_{0,1} &  ...&b_{0,n-1}\\
1& 1 & a_{1}       & ...&b_{1,n-1}\\
. & . &.      &...&.\\
1 & 1& .&...&a_{n-1}\\

\end{array}
\right). $\end{center}

We obtain that the short exact sequence defined above induces the desired long exact sequence. More precisely, we have the following theorem.

\begin{theorem}\label{Connes2} Let $k$ be a field of characteristic zero. For a triple $(A,B,\varepsilon)$ we have a long exact sequence
$$...\rightarrow HC_n(A,B,\varepsilon)\stackrel{S}{\rightarrow}  HC_{n-2}(A,B,\varepsilon)\stackrel{B}{\rightarrow}  HH_{n-1}(A,B,\varepsilon)\stackrel{I}{\rightarrow}  HC_{n-1}(A,B,\varepsilon)\rightarrow ....$$
\end{theorem}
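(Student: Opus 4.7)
The plan is to feed the short exact sequence of complexes
$$0\to Tot'_\bullet C(A,B,\varepsilon)\stackrel{i}{\rightarrow} Tot_\bullet C(A,B,\varepsilon)[2]\stackrel{s}{\rightarrow} Tot_\bullet C(A,B,\varepsilon)\to 0$$
into the standard long exact sequence in homology, and then identify each of the three families of groups and the connecting homomorphism with the terms appearing in the statement of Theorem \ref{Connes2}.

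First, I would verify that $H_\bullet(Tot_\bullet C(A,B,\varepsilon))\cong HC_\bullet(A,B,\varepsilon)$. In characteristic zero the rows of the bicomplex $C(A,B,\varepsilon)$ are exact, because $Ker(1-\lambda)=Im\,N$ and $Ker\,N=Im(1-\lambda)$ (using the averaging trick already exploited in Theorem \ref{Connes}: if $N\phi=0$ set $\psi=\phi-\lambda^2\phi-2\lambda^3\phi-\cdots-(n-1)\lambda^n\phi$, so that $(1-\lambda)\bigl(\tfrac{1}{n+1}\psi\bigr)=\phi$). A standard spectral-sequence comparison then identifies the homology of the total complex with the homology of the quotient column $C_\bullet(A,B,\varepsilon)/Im(1-\lambda)=C_\bullet^\lambda(A,B,\varepsilon)$, which by definition is $HC_\bullet(A,B,\varepsilon)$. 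Consequently, $H_\bullet(Tot_\bullet C(A,B,\varepsilon)[2])\cong HC_{\bullet-2}(A,B,\varepsilon)$.

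Second, I would show that $H_\bullet(Tot'_\bullet C(A,B,\varepsilon))\cong HH_\bullet(A,B,\varepsilon)$. The complex $Tot'_\bullet C(A,B,\varepsilon)$ is the total complex of the two-column bicomplex $(C_\bullet(A,B,\varepsilon),b)\xleftarrow{1-\lambda}(C_\bullet(A,B,\varepsilon),-b')$. Since the $b'$-column is acyclic (the explicit contracting homotopy produced at the end of the proof of Theorem \ref{Connes} transfers verbatim from the cochain setting to the chain setting; this is essentially the extra degeneracy lemma), and the map $t_n$ given in the text provides an explicit chain homotopy equivalence between $(C_\bullet(A,B,\varepsilon),b)$ and $Tot'_\bullet C(A,B,\varepsilon)$, we obtain the claimed identification.

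Third, I would unpack the connecting morphism and the induced maps. The map $i$ is an inclusion of the first two columns; after the identification above it becomes the map $I:HH_{n-1}(A,B,\varepsilon)\to HC_{n-1}(A,B,\varepsilon)$ induced by the canonical projection $C_\bullet(A,B,\varepsilon)\twoheadrightarrow C_\bullet^\lambda(A,B,\varepsilon)$. The truncation $s$ induces the periodicity map $S:HC_n(A,B,\varepsilon)\to HC_{n-2}(A,B,\varepsilon)$. The connecting homomorphism in the long exact sequence is then, by construction, the Connes operator $B:HC_{n-2}(A,B,\varepsilon)\to HH_{n-1}(A,B,\varepsilon)$. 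Assembling all of this yields the stated long exact sequence.

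The main obstacle I anticipate is purely bookkeeping: checking carefully that, under the homotopy equivalence $t_\bullet$ and the quasi-isomorphism between $Tot_\bullet C$ and $C^\lambda_\bullet$, the abstract connecting homomorphism really translates into the classical $B$, and that $i_*$ and $s_*$ really translate into $I$ and $S$. Since the entire architecture is a strict analog of Loday's construction in \cite{lo} and \cite{k2}, these verifications amount to routine diagram chases, and no new ingredient beyond the homological algebra already set up in the paper is required.
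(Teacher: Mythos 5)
Your proposal is correct and follows essentially the same route as the paper: the same cyclic bicomplex $C(A,B,\varepsilon)$ with exact rows in characteristic zero, the same short exact sequence $0\to Tot'_\bullet C\to Tot_\bullet C[2]\to Tot_\bullet C\to 0$, the identification of $Tot'_\bullet C$ with $(C_\bullet(A,B,\varepsilon),b)$ via the acyclicity of the $b'$-column and the map $t_\bullet$, and the identification of the connecting homomorphism with Connes' operator $B=Nt(1-\lambda)$. No substantive difference from the argument given in the text preceding the theorem.
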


Here $S$ is induced by the truncation map $s$ and $B$ is the connecting homomorphism of the long exact sequence. An explicit formula for $B$, at the level of chains, is $B=Nt(1-\lambda):C_n(A, B, \varepsilon)\rightarrow C_{n+1}(A, B, \varepsilon)$.

\begin{remark}
In a future paper we are planing to come back to these groups and discus their relation with the de Rham homology. We are especially interested in defining a $K$-theory associated to a triple $(A,B,\varepsilon)$, and in finding its geometrical meaning. Notice that even in the commutative case the construction from this paper has the potential to provide a new perspective on problems in geometry.
\end{remark}




\bibliographystyle{amsalpha}

\end{document}